\numberwithin{equation}{section}
\newtheorem{thm}{Theorem}[section]
\newtheorem{lem}[thm]{Lemma}
\newtheorem{prop}[thm]{Proposition}
\newtheorem{cor}[thm]{Corollary}
\theoremstyle{definition}
\newtheorem{exam}[thm]{Example}
\newtheorem{defn}[thm]{Definition}
\newtheorem{problem}[thm]{Problem}
\newtheorem{remark}[thm]{Remark}
\crefname{lem}{Lemma}{Lemmas}
\crefname{thm}{Theorem}{Theorems}
\crefname{prop}{Proposition}{Propositions}
\crefname{question}{Question}{Questions}
\crefname{defn}{Definition}{Definitions}
\crefname{conj}{Conjecture}{Conjectures}
\crefname{figure}{Figure}{Figures}
\crefname{cor}{Corollary}{Corollaries} 
\newcommand\len{\operatorname{len}}
\newcommand\EE{\mathcal{E}}
\newcommand\JJ{\mathcal{J}}
\newcommand\Comp{\operatorname{Comp}}
\newcommand{\ZZ}{\mathbb{Z}}
\newcommand\wt{\operatorname{wt}}
\newcommand\LB[3]{\node[fill=white,draw=black,circle,inner sep=1pt] (L#2) at (0,4*#2/#3) {\( #1 \)};}
\newcommand\RB[3]{\node[fill=white,draw=black,circle,inner sep=1pt] (R#2) at (3,4*#2/#3) {\( #1 \)};}
\newcommand\drop[1]{\draw[ultra thick,in=0,out=90] (0) to (L#1);}
\newcommand\throw[1]{\draw[ultra thick,in=180,out=90] (0) to (R#1);}
\newcommand\edge[2]{\draw[ultra thick,in=180,out=0] (L#1) to (R#2);}
\newcommand\LW[3]{\node[left] at (-.3,4*#2/#3) {\( #1 \)};}
\newcommand\RW[3]{\node[right] at (3.3,4*#2/#3) {\( #1 \)};}
\title[Enumeration of multiplex juggling card sequences]{Enumeration of multiplex juggling card sequences using
  generalized \( q \)-derivatives}
\author{Yumin Cho}
\address{School of Biological Sciences, Seoul National University, Seoul, South Korea}
\email{ymcho07@snu.ac.kr}
\author{Jaehyun Kim}
\address{Department of Mechanical Engineering, Seoul National University, Seoul, South Korea}
\email{kjjhh2006@snu.ac.kr}
\author{Jang Soo Kim}
\address{Department of Mathematics,
Sungkyunkwan University (SKKU), Suwon, South Korea}
\email{jangsookim@skku.edu}
\author{Nakyung Lee}
\address{Department of Industrial Engineering, Seoul National University, Seoul, South Korea}
\email{nakyung06@snu.ac.kr}
\thanks{The authors are listed in alphabetical order, as is customary in mathematics, and contributed equally to this work.}
\keywords{juggling sequences, rational generating functions, \( q \)-derivatives}
\subjclass[2020]{Primary: 05A15; Secondary: 05A19}
\begin{document}

\begin{abstract}
  In 2019, Butler, Choi, Kim, and Seo introduced a new type of
  juggling card that represents multiplex juggling patterns in a
  natural bijective way. They conjectured a formula for the generating
  function for the number of multiplex juggling cards with capacity
  \( 2 \). In this paper we prove their conjecture. More generally, we
  find an explicit formula for the generating function with any
  capacity. We also find an expression for the generating function for
  multiplex juggling card sequences by introducing a generalization of
  the \( q \)-derivative operator. As a consequence, we show that this
  generating function is a rational function.
\end{abstract}

\maketitle


\section{Introduction}

Juggling is an act of throwing and catching balls. Since the 1980s,
juggling has been studied mathematically by many researchers; for
example, see \cite{Ayyer2015, Banaian2016, Benedetti2020, Buhler1994,
  Butler2017, Butler2010, Cardinal2006, Chung2008, Ehrenborg1996,
  Engstrom2015, Stadler2002, Warrington2005} and references therein.
We refer the reader to \cite{Buhler1994,Polster2003} for the history
of mathematics of juggling.

Juggling can be divided into two categories: \emph{simple juggling}
and \emph{multiplex juggling}. In simple juggling, at most one ball is
caught and thrown at every beat. Multiplex juggling is a generalization
of simple juggling, where at most \( k \) balls are caught and thrown
at every beat. The number \( k \) is called the \emph{(hand)
  capacity}.

A \emph{simple juggling pattern} is a bijection \( f:\ZZ\to \ZZ \)
such that \( f(i)\ge i \) for all \( i\in \ZZ \). This can be
understood as the situation that a juggler catches a ball at beat
\( i \) and throws it immediately so that it lands at beat \( f(i) \).
(If \( f(i)=i \), then the juggler does not catch or throw any ball at
beat \( i \).) A simple juggling pattern is \emph{periodic} if the
function \( h:\ZZ\to \ZZ \) defined by \( h(i)=f(i)-i \) is periodic.

Buhler, Eisenbud, Graham, and Wright \cite{Buhler1994} showed that the
number of simple juggling patterns with \( b \) balls and period
\( p \) is equal to \( (b+1)^p - b^p \). Ehrenborg and Readdy
\cite{Ehrenborg1996} found a simple proof of this result by
introducing juggling cards.

There are some models of multiplex juggling cards introduced by
Ehrenborg--Readdy \cite{Ehrenborg1996} and
Butler--Chung--Cummings--Graham \cite{Butler2017}. However, they do
not represent multiplex juggling patterns in a bijective way.
In 2019, Butler, Choi, Kim, and Seo \cite{Butler2019} introduced a
new type of card that represents multiplex juggling patterns in a natural
bijective way. We refer the reader to \cite{Butler2019} for more
details.

In this paper we study enumerative properties of the number of
multiplex juggling card sequences. We consider three natural
parameters of multiplex juggling card sequences: the number \( b \) of
balls, the capacity \( k \), and the length \( \ell \) of a sequence
of cards. Let \( J(b,k,\ell) \) denote the number of multiplex
juggling card sequences with given parameters \( k,b \), and
\( \ell \). See \Cref{sec:preliminaries} for the precise definition.

Now we review some known results on the number \( J(b,k,1) \) of
multiplex juggling cards with \( b \) balls and capacity \( k \). If
\( k=1 \), it is immediate from the definition that
\begin{equation}\label{eq:6}
   \sum_{b\ge 0}  J(b,1,1) x^b
   = \sum_{b\ge0} (b+1) x^b = \frac{1}{(1-x)^2}.
\end{equation}
For the other extreme case \( k=\infty \), Butler et
al.~\cite[Theorem~4]{Butler2019} showed that the sequence
\( \{ J(b,\infty,1) \}_{b\ge0} \) satisfies the following simple
linear recurrence: \( J(0,\infty,1) = 1 \), \( J(1,\infty,1) = 2 \),
\( J(2,\infty,1) = 7 \), and for \( b\ge3 \),
\begin{equation}\label{eq:bulter}
  J(b,\infty,1) = 4J(b-1,\infty,1) - 2J(b-2,\infty,1). 
\end{equation}
By the standard method for linear recurrences, see
\cite[Theorem~4.1.1]{EC1}, one can easily check that the recursion~\eqref{eq:bulter}
is equivalent to
\begin{equation}\label{eq:2}
  \sum_{b\ge0} J(b,\infty,1) x^b = \frac{1-2x+x^2}{1-4x+2x^2}.
\end{equation}
For the case \( k=2 \), Butler et al. \cite[Conjecture~13]{Butler2019}
conjectured the following generating function formula:
 \begin{equation}\label{eq:conj}
   \sum_{b\ge 0}  J(b,2,1) x^b
   = \frac{1-x+x^2+x^3}{(1-x-x^2)^3}.
 \end{equation} 

 In this paper we prove this conjecture; see \Cref{exa:3}. More
 generally, we find an explicit formula for the generating function
 \( \sum_{b\ge 0} J(b,k,1) x^b \) for any capacity \( k \); see
 \Cref{cor:l=1}. We also find an expression for
 \begin{equation}\label{eq:11}
   \sum_{b\ge 0} J(b,k,\ell) x^b
 \end{equation}
 for any capacity \( k \) and length \( \ell \) by introducing a
 generalization of the \( q \)-derivative operator; see \Cref{thm:3}.
 As a consequence, we show that the generating function \eqref{eq:11} is a rational function
 in \( x \).

 The rest of this paper is organized as follows. In
 \Cref{sec:preliminaries} we provide the necessary definitions. In
 \Cref{sec:gener-funct-numb} we study the generating function
 \eqref{eq:11} for the case \( \ell=1 \). In
 \Cref{sec:generating-function-kl} we study the generating function
 \eqref{eq:11} for a general \( \ell \). In \Cref{sec:rationality} we
 show that \eqref{eq:11} is a rational function in \( x \). In
 \Cref{sec:conclusion} we summarize our results and propose some open
 problems.

\section{Preliminaries}
\label{sec:preliminaries}

In this section we give necessary definitions. Throughout this paper
we will use the notation \( [n] = \{ 1,\dots,n\} \) for a positive
integer \( n \).

\begin{defn}
  A \emph{composition} is a sequence
  \( \alpha=(\alpha_1,\dots,\alpha_r) \) of positive integers. Each
  \( \alpha_i \) is called a \emph{part} of~\( \alpha \). The
  \emph{size} \( |\alpha| \) of \( \alpha \) is defined by
  \( |\alpha| = \alpha_1 + \cdots + \alpha_r \). If \( |\alpha|=n \),
  we say that \( \alpha \) is a composition of \( n \). The
  \emph{length} \( \ell(\alpha) \) of \( \alpha \) is defined to be
  the number of parts in \( \alpha \). We denote by \( \Comp(n) \) the
  set of compositions of~\( n \). We also denote by \( \Comp(n,k) \)
  the set of compositions of~\( n \) with \( k \) parts.
\end{defn}

\begin{defn}\label{def:2}
  A \emph{(multiplex juggling) card} is a triple
  \( (\alpha,\beta,f) \) such that
  \( \alpha=(\alpha_1,\dots,\alpha_r) \) and
  \( \beta=(\beta_1,\dots,\beta_s) \) are compositions with
  \( |\alpha|=|\beta| \), and \( f:[r]\to \{0\}\cup[s] \) is a
  strictly increasing function satisfying
  \( \alpha_i \le \beta_{f(i)} \) for all \( i\in [r] \) with
  \( f(i)\ne 0 \). We call \( \alpha \) and \( \beta \) the
  \emph{arrival composition} and the \emph{departure composition} of
  the card, respectively. If every part of \( \alpha \) and
  \( \beta \) is at most \( k \), then we say that the card has
  \emph{capacity}~\( k \). We also say that the card has
  \( |\alpha| \) \emph{balls}.
\end{defn}

We can visualize a card \( (\alpha,\beta,f) \) as follows. Suppose
\( \alpha=(\alpha_1,\dots,\alpha_r) \) and
\( \beta=(\beta_1,\dots,\beta_s) \). Consider a rectangle with \( r \)
vertices on the left side labeled \( \alpha_1,\dots,\alpha_r \) from
bottom to top, \( s \) vertices on the right side labeled
\( \beta_1,\dots,\beta_s \) from bottom to top, and a vertex on the
bottom side called the \emph{ground vertex}. If \( f(1)\ne 0 \), then
it follows from \Cref{def:2} that we must have \( r=s \) and
\( f(i)=i \) for all \( i\in [r] \). In this case, draw a curve from
vertex \( \alpha_i \) to vertex~\( \beta_i \) for all
\( i\in [r] \). If \( f(1) = 0 \), then draw a curve from vertex
\( \alpha_1 \) to the ground vertex, a curve from vertex
\( \alpha_i \) to vertex \( \beta_{f(i)} \) for each
\( i\in \{2,3,\dots,r\} \), and a curve from the ground vertex to each
vertex \( \beta_j \) such that either \( j = f(i) \) for some
\( i\in [r] \) with \( \alpha_i < \beta_j \) or \( j \) is not in the
image of \( f \).

Such a visualization can be understood as \( \alpha_i \) balls
entering the card on level \( i \) and \( \beta_j \) balls leaving the
card on level \( j \). Note that if \( f(1)\ne 0 \) then all balls
stay in the air, and if \( f(1)= 0 \) then \( \alpha_1 \) balls are
caught and thrown again, so that these balls are redistributed.

\begin{exam}\label{exa:1}
  Let \( (\alpha,\beta,f) \) be the card such that
  \( \alpha = (4,2,3) \), \( \beta = (4,2,3) \), and
  \( f:\{1,2,3\}\to\{0,1,2,3\} \) is the function given by
  \( f(1)=1 \), \( f(2)=2 \), and \( f(3)=3 \). Then the card can be
  visualized as the left diagram in \Cref{fig:visualization}.
\end{exam}

\begin{exam}\label{exa:2}
  Let \( (\alpha,\beta,f) \) be the triple such that
  \( \alpha = (6,1,2,2) \), \( \beta = (3,1,2,3,2) \), and
  \( f:\{1,2,3,4\}\to\{0,1,2,3,4,5\} \) is the function given by
  \( f(1)=0 \), \( f(2)=1 \), \( f(3)=3 \), and \( f(4)=4 \). Then the
  card can be visualized as the right diagram in
  \Cref{fig:visualization}.
\end{exam}

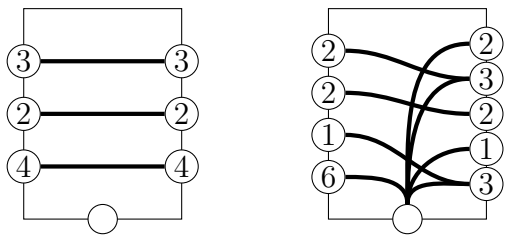
\begin{figure}
     \centering
\begin{tikzpicture}
\begin{scope}[scale=.7]
  \draw (0,0) rectangle (3,4);
  \node[fill=white,draw=black,circle] (0) at (1.5,0) {};
  \LB414 \LB224 \LB334
  \RB414 \RB224 \RB334
  \edge11 \edge22 \edge33
  \LW{}14 \LW{}24 \LW{}34
  \RW{}14 \RW{}24 \RW{}34
\end{scope}
\end{tikzpicture} \qquad 
\begin{tikzpicture}
\begin{scope}[scale=.7]
  \draw (0,0) rectangle (3,4);
  \node[fill=white,draw=black,circle] (0) at (1.5,0) {};
  \LB615 \LB125 \LB235 \LB245 \RB316 \RB126 \RB236 \RB346 \RB256
  \edge21 \edge33 \edge44 \drop1 \throw1 \throw2 \throw4 \throw5
  \LW{}15 \LW{}25 \LW{}35 \LW{}45
  \RW{}16 \RW{}26 \RW{}36 \RW{}46 \RW{}56
\end{scope}
\end{tikzpicture}
\caption{The left diagram is a visualization of the card in
  \Cref{exa:1}. The right diagram is a visualization of the card in
  \Cref{exa:2}.}
     \label{fig:visualization}
 \end{figure}

 One multiplex juggling card represents a situation of multiplex
 juggling at a given moment, say at beat \( i \). We say that two
 juggling cards \((\alpha, \beta, f)\) and
 \(\left(\alpha^{\prime}, \beta^{\prime}, f^{\prime}\right)\) are
 \emph{compatible} if \(\beta=\alpha^{\prime}\). By listing \( \ell \)
 compatible cards, we can represent a situation of multiplex juggling
 from beats \( 1 \) to \( \ell \). To be more precise, we introduce
 the following definition.

\begin{defn}\label{def:l-cs}
  An \emph{\( \ell\)-card sequence} with \( b \) \emph{balls} and
  \emph{capacity} \( k \) is a sequence
  \( (C_1,\dots, C_\ell) \) of \( \ell \) cards with
  \( b \)~balls and capacity \( k \) such that the departure
  composition of \( C_i \) is equal to the arrival
  composition of \( C_{i+1} \) for all \( i\in [\ell-1] \).
  We denote by \( \JJ(b,k,\ell) \) the set of \( \ell \)-card
  sequences with \( b \) balls and capacity \( k \). We also define
  \( J(b,k,\ell) = |\JJ(b,k,\ell)| \).
\end{defn}

For example, see \Cref{fig:card-seq}.

\begin{figure}
    \centering
\begin{tikzpicture}
\begin{scope}[scale=.7]
  \draw (0,0) rectangle (3,4);
  \node[fill=white,draw=black,circle] (0) at (1.5,0) {};
  \LB414 \LB224 \LB334 \RB414 \RB324 \RB234
  \edge21 \edge32 \drop1 \throw1 \throw3
\end{scope}
\end{tikzpicture}\qquad 
\begin{tikzpicture}
\begin{scope}[scale=.7]
  \draw (0,0) rectangle (3,4);
  \node[fill=white,draw=black,circle] (0) at (1.5,0) {};
  \LB414 \LB324 \LB234 \RB215 \RB325 \RB335 \RB145
  \edge22 \edge33 \drop1 \throw1 \throw3 \throw4
\end{scope}
\end{tikzpicture}\qquad 
\begin{tikzpicture}
\begin{scope}[scale=.7]
  \draw (0,0) rectangle (3,4);
  \node[fill=white,draw=black,circle] (0) at (1.5,0) {};
  \LB215 \LB325 \LB335 \LB145 \RB215 \RB325 \RB335 \RB145
  \edge11 \edge22 \edge33 \edge44
\end{scope}
\end{tikzpicture}\qquad 
\begin{tikzpicture}
\begin{scope}[scale=.7]
  \draw (0,0) rectangle (3,4);
  \node[fill=white,draw=black,circle] (0) at (1.5,0) {};
  \LB215 \LB325 \LB335 \LB145 \RB415 \RB325 \RB135 \RB145
  \edge21 \edge32 \edge43 \drop1 \throw1 \throw4
\end{scope}
\end{tikzpicture}
\caption{An example of a 4-card sequence.}
     \label{fig:card-seq}
\end{figure}
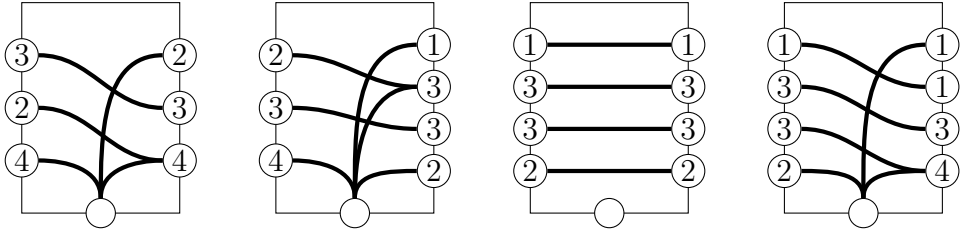

\section{Juggling cards with fixed capacity}
\label{sec:gener-funct-numb}

In this section we provide three expressions for the generating
function
\begin{equation}\label{eq:7}
  \sum_{b\ge0} J(b,k,1)x^b 
\end{equation}
with fixed capacity \( k \). To this end we give another description
of a card using embeddings introduced in \cite{Butler2019}.

Recall that \( \JJ(b,k,1) \) is the set of all cards with \( b \)
balls and capacity \( k \). We use the notation \( r^s\) to denote
the word consisting of \( s \) \( r \)'s.

\begin{defn}
  A \emph{\( (b,k) \)-embedding} is a sequence
  \( \gamma = (\gamma_1,\dots,\gamma_s) \) satisfying the following
  conditions:
  \begin{itemize}
  \item Each \( \gamma_i \) is a word of the form
    \( \gamma_i= 0^u 1^v \) for some integers \( u,v\ge0 \) with \( 1\le u+v\le k \).
  \item The total number of \( 1 \)'s in all of \( \gamma_1,\dots,\gamma_s \)
    is at most \( k \).
  \item The sum of the lengths of \( \gamma_i \) for all \( i\in [s] \) is equal to \( b \).
  \end{itemize}
  Let \( \EE(b,k) \) be the set of \( (b,k) \)-embeddings.
\end{defn}

Let \( (\alpha,\beta,f)\in \JJ(b,k,1) \), where
\( \alpha=(\alpha_1,\dots,\alpha_r) \) and
\( \beta= (\beta_1,\dots,\beta_s) \). Let
\( \gamma = (\gamma_1,\dots,\gamma_s) \) be the \( (b,k) \)-embedding
defined by
\[
  \gamma_j =
  \begin{cases}
   1^{\beta_j} & \mbox{if \( j \) is not in the image of \( f \)},\\
   0^{\alpha_i}1^{\beta_j-\alpha_i} & \mbox{if \( j = f(i) \)}.
  \end{cases}
\]
This can also be understood using the visualization of the card
\( (\alpha,\beta,f) \) as follows. For each vertex \( \beta_j \), if
it is connected to vertex \( \alpha_i \) for some \( i\in [r] \),
then \( \gamma_j = 0^{\alpha_i}1^{\beta_j-\alpha_i} \) and otherwise
\( \gamma_j = 1^{\beta_j} \). The \( 0 \)'s encode the balls passing
over the ground vertex, whereas the \( 1 \)'s encode the balls that were
thrown up from this vertex.

\begin{exam}
  If \( (\alpha,\beta,f) \) is the card in \Cref{exa:1}, then
  \( \gamma=(0000,00,000) \). If \( (\alpha,\beta,f) \) is the card in
  \Cref{exa:2}, then \( \gamma=(011,1,00,001,11) \). See
  \Cref{fig:def_alpha_card}.
\end{exam}

\begin{figure}
     \centering
\begin{tikzpicture}
\begin{scope}[scale=.7]
  \draw (0,0) rectangle (3,4);
  \node[fill=white,draw=black,circle] (0) at (1.5,0) {};
  \LB414 \LB224 \LB334
  \RB414 \RB224 \RB334
  \edge11 \edge22 \edge33
  \LW{}14 \LW{}24 \LW{}34
  \RW{0000}14 \RW{00}24 \RW{000}34
\end{scope}
\end{tikzpicture} \qquad  \qquad 
\begin{tikzpicture}
\begin{scope}[scale=.7]
  \draw (0,0) rectangle (3,4);
  \node[fill=white,draw=black,circle] (0) at (1.5,0) {};
  \LB615 \LB125 \LB235 \LB245 \RB316 \RB126 \RB236 \RB346 \RB256
  \edge21 \edge33 \edge44 \drop1 \throw1 \throw2 \throw4 \throw5
  \LW{}15 \LW{}25 \LW{}35 \LW{}45
  \RW{011}16 \RW{1}26 \RW{00}36 \RW{001}46 \RW{11}56
\end{scope}
\end{tikzpicture}
\caption{The left diagram shows the card in \Cref{exa:1} and its
  corresponding embedding \( (0000,00,000) \). The right diagram shows
  the card in \Cref{exa:2} and its corresponding embedding
  \( (011,1,00,001,11) \).}
     \label{fig:def_alpha_card}
 \end{figure}
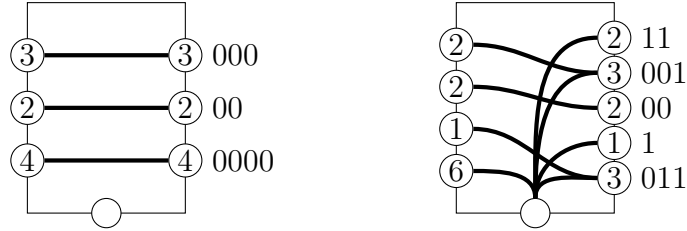

It is easy to see that the map \( (\alpha,\beta,f)\mapsto \gamma \) is
a bijection from \( \JJ(b,k,1) \) to \( \EE(b,k) \). Therefore, we can
identify a card with \( b \) balls and capacity \( k \) with a
\( (b,k) \)-embedding.

Now we are ready to find an expression for the generating function in
\eqref{eq:7}. For a formal power series \( F(z) \) in \( z \), the
notation \( [z^k]F(z) \) denotes the coefficient of \( z^k \) in
\( F(z) \).

\begin{prop}\label{prop:1}
  For a positive integer \( k \), we have
  \[
    \sum_{b\ge0} J(b,k,1)x^b 
    = [z^k] \left( \frac{1}{1-z} \cdot \frac{1}{1-\sum_{i=1}^k x^{i} \sum_{j=0}^{i} z^j } \right).
\]
\end{prop}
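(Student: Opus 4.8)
The plan is to use the bijection \( \JJ(b,k,1)\cong\EE(b,k) \) established just above the statement, so that \( J(b,k,1) \) counts \( (b,k) \)-embeddings, and then to assemble the generating function \( \sum_{b\ge0}J(b,k,1)x^b \) from the block-by-block structure of an embedding, treating the global condition ``at most \( k \) ones'' as a partial sum of coefficients.

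First I would temporarily drop the condition that the total number of \( 1 \)'s be at most \( k \), and instead record it with a second variable \( z \). Consider all finite sequences \( \gamma=(\gamma_1,\dots,\gamma_r) \), \( r\ge0 \), in which each \( \gamma_i \) has the form \( 0^u1^v \) with \( u,v\ge0 \) and \( 1\le u+v\le k \), and assign to \( \gamma \) the weight \( x^{\len(\gamma)}z^{m(\gamma)} \), where \( \len(\gamma) \) is the total length and \( m(\gamma) \) the total number of \( 1 \)'s. A single admissible block \( 0^u1^v \) of length \( i=u+v \) contributes \( x^i z^v \), with \( v \) ranging over \( 0,1,\dots,i \); hence the weight generating function of one block is \( B(x,z):=\sum_{i=1}^k x^i\sum_{j=0}^i z^j \). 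Since blocks are concatenated independently, the generating function of all such sequences is \( \sum_{r\ge0}B(x,z)^r=\dfrac{1}{1-B(x,z)} \). This is a well-defined element of \( \ZZ[z][[x]] \): \( B(x,z) \) has no constant term in \( x \), and since every block has length at least \( 1 \), the coefficient of \( x^b \) receives contributions from at most \( b \) blocks and is therefore a polynomial in \( z \).

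The second step is to reinstate the condition ``at most \( k \) ones'' via the standard partial-summation device: for any \( G(z)=\sum_{m\ge0}g_mz^m \) one has \( [z^k]\!\left(\frac{1}{1-z}G(z)\right)=\sum_{m=0}^k g_m \). Applying this, coefficientwise in \( x \), with \( G(z)=\frac{1}{1-B(x,z)} \), extracts exactly those sequences \( \gamma \) with \( m(\gamma)\le k \), i.e.\ exactly the \( (b,k) \)-embeddings; by the bijection, the resulting series in \( x \) is \( \sum_{b\ge0}J(b,k,1)x^b \). Since \( B(x,z)=\sum_{i=1}^k x^i\sum_{j=0}^i z^j \), this is precisely the claimed formula.

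I do not expect a serious obstacle. The only points requiring care are: (i) the bookkeeping of \( B(x,z) \), where the inner range \( 0\le j\le i \) is exactly the set of possible numbers of \( 1 \)'s in a block of length \( i \); and (ii) verifying that \( 1/(1-B(x,z)) \) is a legitimate formal power series in \( x \) with polynomial coefficients in \( z \), so that the operator \( [z^k]\frac{1}{1-z}(\cdot) \) may be applied term by term. The conceptual heart is simply recognizing that an ``at most \( k \)'' constraint on the exponent of \( z \) is a partial sum of coefficients, hence encoded by multiplying by \( 1/(1-z) \) and reading off the coefficient of \( z^k \).
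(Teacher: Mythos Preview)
Your proposal is correct and follows essentially the same approach as the paper: both proofs use the bijection with \( (b,k) \)-embeddings, compute the single-block generating function \( \sum_{i=1}^k x^i\sum_{j=0}^i z^j \), pass to \( 1/(1-B) \) for sequences of blocks, and then recover the ``at most \( k \) ones'' constraint via \( [z^k]\frac{1}{1-z}(\cdot)=\sum_{s=0}^k[z^s](\cdot) \). Your version is slightly more explicit about the formal well-definedness in \( \ZZ[z][[x]] \), but there is no substantive difference.
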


\begin{proof}
  Since \( J(0,k,1)=1 \) and
  \( J(b,k,1) = |\JJ(b,k,1)| = |\EE(b,k)| \) for \( b\ge1 \), we can
  instead consider the \( (b,k) \)-embeddings for all \( b\ge1 \).
  Recall that if \( \gamma = (\gamma_1,\dots,\gamma_s) \) is a
  \( (b,k) \)-embedding, then \( \gamma_i= 0^{u_i} 1^{v_i} \) for some
  integers \( u_i,v_i\ge0 \) with \( 1\le u_i+v_i\le k \) such that
  \( 0\le v_1 + \cdots + v_s \le k \) and
  \( u_1 + \cdots + u_s + v_1 + \cdots + v_s = b \).

  Let \( W \) be the set of words \( 0^u1^v \) such that \( u,v\ge0 \)
  and \( 1\le u+v\le k \). For a word \( w=0^u1^v\in W \), let
  \( \ell(w)=u+v \) and \( \ell_1(w) = v \). By definition, we have
  \[
    \sum_{w\in W} x^{\ell(w)} z^{\ell_1(w)} = \sum_{i=1}^k x^{i} \sum_{j=0}^{i} z^j.
  \]
  Thus we obtain
  \begin{equation}\label{eq:3}
    \frac{1}{1-\sum_{i=1}^k x^{i} \sum_{j=0}^{i} z^j } = 
    1+ \sum_{t\ge1} \sum_{(w_1,\dots,w_t)\in W^t} x^{\ell(w_1) + \cdots + \ell(w_t)}
    z^{\ell_1(w_1) + \cdots + \ell_1(w_t)}.
  \end{equation}

  On the other hand, for \( b\ge1 \), \( \EE(b,k) \) is the set of
  \( \gamma = (\gamma_1,\dots,\gamma_t)\in W^t \), \( t\ge1 \), such
  that \( \ell(\gamma_1) + \cdots + \ell(\gamma_t) = b \) and
  \( 0\le \ell_1(\gamma_1) + \cdots + \ell_1(\gamma_t) \le k \). Thus
  \begin{align*}
    \sum_{b\ge0} J(b,k,1)x^b 
    &= 1+ \sum_{b\ge1} |\EE(b,k)|x^b \\
    &= 1+ \sum_{t\ge1} \sum_{\substack{(w_1,\dots,w_t)\in W^t \\ 0\le \ell_1(\gamma_1) + \cdots + \ell_1(\gamma_t) \le k}}
    x^{\ell(w_1) + \cdots + \ell(w_t)}\\
    &= \sum_{p=0}^{k} [z^p]\left( 1+ \sum_{t\ge1} \sum_{(w_1,\dots,w_t)\in W^t}
    x^{\ell(w_1) + \cdots + \ell(w_t)}
    z^{\ell_1(w_1) + \cdots + \ell_1(w_t)} \right).
  \end{align*}
  By equation \eqref{eq:3}, this expression is equal to
  \begin{equation}\label{eq:15}
 \sum_{p=0}^{k} [z^p]
      \left( \frac{1}{1-\sum_{i=1}^k x^{i} \sum_{j=0}^{i} z^j } \right)
    =[z^k] \left( \frac{1}{1-z} \cdot \frac{1}{1-\sum_{i=1}^k x^{i} \sum_{j=0}^{i} z^j } \right),
  \end{equation}
 as desired. 
\end{proof}

\begin{remark}
  \Cref{prop:1} can be used to prove equation \eqref{eq:2}, which is equivalent
  to the result of Butler et al. \cite[Theorem~4]{Butler2019}. To see
  this, note that, by equation \eqref{eq:15}, \Cref{prop:1} can be rewritten as
  \[
    \sum_{b\ge0} J(b,k,1)x^b 
    = \sum_{p=0}^k [z^p] \left(  \frac{1}{1-\sum_{i=1}^k x^{i} \sum_{j=0}^{i} z^j } \right).
\]
Taking the limit as \( k \) tends to infinity, we have
\[
  \sum_{b\ge0} J(b,\infty,1)x^b = \sum_{p=0}^\infty [z^p] \left(
    \frac{1}{1-\sum_{i=1}^\infty x^{i} \sum_{j=0}^{i} z^j } \right).
\]
Adding the coefficient of \( z^p \) for all nonnegative \( p \)
is equivalent to substituting \( z=1 \). Hence
we obtain
\[
  \sum_{b\ge0} J(b,\infty,1)x^b = \frac{1}{1-\sum_{i=1}^\infty x^{i}
    (i+1) }
  = \frac{1}{1- \frac{d}{dx}\left( \frac{1}{1-x}-1-x \right)}
  = \frac{1-2x+x^2}{1-4x+2x^2}.
\]
The sequence \( \{J(b,\infty,1)\}_{b\ge0} \) is
\href{https://oeis.org/A003480}{A003480} in the On-Line Encyclopedia
of Integer Sequences (OEIS) \cite{OEIS}:
 \[
   1, 2, 7, 24, 82, 280, 956, 3264, 11144, 38048, 129904, 443520, 1514272, \dots.
 \]
\end{remark}

Now we give another expression for the generating function in
\eqref{eq:7}. For a composition \( \alpha\in \Comp(n) \), we define
\( \ell_2(\alpha) \) to be the number of parts of \( \alpha \) at
least \( 2 \).

\begin{thm}\label{thm:l=1}
  For a positive integer \( k \), we have
\[
    \sum_{b\ge0} J(b,k,1)x^b 
    = \sum_{\alpha\in \Comp(k)} 
    \frac{(-1)^{\ell_2(\alpha)} x^{k-\ell(\alpha)}}{(1-x - \cdots - x^{k})^{1+\ell(\alpha)}}.
\]
\end{thm}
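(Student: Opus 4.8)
The plan is to start from the expression obtained in \Cref{prop:1}, namely
\[
  \sum_{b\ge0} J(b,k,1)x^b
  = [z^k]\left( \frac{1}{1-z}\cdot\frac{1}{1-P(x,z)} \right),
  \qquad P(x,z) := \sum_{i=1}^k x^i\sum_{j=0}^i z^j,
\]
and to extract the coefficient of $z^k$ by a partial-fraction-type expansion in $z$. The key observation is that $\sum_{j=0}^i z^j = \frac{1-z^{i+1}}{1-z}$, so
\[
  P(x,z) = \frac{1}{1-z}\sum_{i=1}^k x^i(1-z^{i+1})
         = \frac{1}{1-z}\left( \frac{x-x^{k+1}}{1-x} - z\sum_{i=1}^k x^i z^{i} \right).
\]
Since we only need $[z^k]$, any term carrying $z^{k+1}$ or higher may be discarded. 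First I would multiply numerator and denominator of $\frac{1}{1-P(x,z)}$ by $1-z$ to get
\[
  \frac{1}{1-z}\cdot\frac{1}{1-P(x,z)}
  = \frac{1}{(1-z) - (1-z)^2\, \text{(stuff)} }\cdots
\]
more precisely, writing $Q(x,z) := \sum_{i=1}^k x^i z^{i+1}$ (the ``bad'' high-degree part), one has
\[
  \frac{1}{1-P(x,z)} = \frac{1-z}{(1-z)(1-A(x)) + Q(x,z)},
  \qquad A(x) := \frac{x-x^{k+1}}{1-x} = x+x^2+\cdots+x^k,
\]
so that $\dfrac{1}{1-z}\cdot\dfrac{1}{1-P(x,z)} = \dfrac{1}{(1-A(x))(1-z) + Q(x,z)}$. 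Now $1-A(x) = 1-x-x^2-\cdots-x^k$ is exactly the denominator base appearing in the theorem, so I would pull it out:
\[
  \frac{1}{1-z}\cdot\frac{1}{1-P(x,z)}
  = \frac{1}{1-A(x)}\cdot\frac{1}{(1-z) + \frac{Q(x,z)}{1-A(x)}}
  = \frac{1}{1-A(x)}\sum_{m\ge0}\frac{(-1)^m}{(1-A(x))^m}\cdot\frac{Q(x,z)^m}{(1-z)^{m+1}}.
\]

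The next step is to take $[z^k]$ of this. Because $Q(x,z)^m = \left(\sum_{i=1}^k x^i z^{i+1}\right)^m$, expanding the $m$-th power gives a sum over compositions: choosing $i_1,\dots,i_m\in[k]$ contributes $x^{i_1+\cdots+i_m} z^{(i_1+1)+\cdots+(i_m+1)}$, i.e. $x^{|\alpha|}z^{|\alpha|+m}$ where $\alpha=(i_1,\dots,i_m)$ runs over $\Comp(\,\cdot\,,m)$. Meanwhile $[z^{k-(|\alpha|+m)}]\frac{1}{(1-z)^{m+1}} = \binom{k-|\alpha|}{m}$ when $|\alpha|+m\le k$. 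So
\[
  [z^k]\!\left(\frac{1}{1-z}\cdot\frac{1}{1-P(x,z)}\right)
  = \sum_{m\ge0}\frac{(-1)^m}{(1-A(x))^{m+1}}\sum_{\alpha\in\Comp(\cdot,m),\,|\alpha|+m\le k} x^{|\alpha|}\binom{k-|\alpha|}{m}.
\]
At this point I would reorganize the sum: I want the index set to be all compositions $\alpha$ of $k$, with the denominator exponent $1+\ell(\alpha)$ and numerator power $x^{k-\ell(\alpha)}$, which matches $m=\ell(\alpha)$ and $|\alpha|=k-m$ forcing every part of $\alpha$ to be... no, that is not right either. The honest reconciliation is that a composition $\alpha=(i_1,\dots,i_m)$ with $|\alpha|=n\le k-m$ should be ``padded'': the binomial $\binom{k-n}{m}$ counts the ways to insert $k-n-m$ unit parts, and the sign $(-1)^m$ with $m=\ell_2(\text{padded comp})$ is \emph{not} literally $\ell_2$, so the final identity I must prove is the combinatorial identity
\[
  \sum_{m\ge0}(-1)^m\! \sum_{\substack{\alpha=(i_1,\dots,i_m)\\ i_j\ge1,\ |\alpha|+m\le k}}\!\! x^{|\alpha|-m}\binom{k-|\alpha|}{m}\frac{1}{(1-A(x))^{m+1}}
  \;=\; \sum_{\beta\in\Comp(k)}\frac{(-1)^{\ell_2(\beta)}x^{k-\ell(\beta)}}{(1-A(x))^{1+\ell(\beta)}},
\]
after also tracking the leftover power of $x$ correctly. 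The cleanest route is a bijection: given a composition $\beta$ of $k$, let $m=\ell_2(\beta)$ be the number of parts $\ge2$, and let $\alpha$ be obtained from $\beta$ by deleting the $\ell(\beta)-m$ parts equal to $1$ and subtracting $1$ from each remaining part, giving a composition of $k-\ell(\beta)$ with $m$ parts each $\ge1$; the number of $\beta$ mapping to a given $\alpha$ (with $|\alpha|=k-\ell(\beta)$, so $\ell(\beta)=k-|\alpha|$) and a given length $\ell(\beta)$ is the number of ways to interleave $\ell(\beta)-m = k-|\alpha|-m$ ones among the $m$ big parts, which is $\binom{k-|\alpha|}{m}$. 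This exactly produces the left-hand side from the right-hand side term by term, with signs and $x$-powers matching since $\ell(\beta)=k-|\alpha|$ gives $x^{k-\ell(\beta)}=x^{|\alpha|}$ — and I would then need to double-check that the bookkeeping of the extra $x$'s (from $A(x)$ versus the bare $x^i$'s in $Q$) is consistent, which I expect to come out right because $Q(x,z)/ (1-A(x)) = z\cdot x\sum_{i\ge1}\cdots / (1-A)$ and the $-m$ in the exponent above is an artifact of mis-separating; I would instead carry $x^{|\alpha|}$ throughout and land directly on $x^{k-\ell(\beta)}$.

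The main obstacle is purely the bookkeeping in this last reorganization: making sure the powers of $x$, the binomial coefficients, the signs $(-1)^{\ell_2}$, and the range conditions $|\alpha|+m\le k$ all line up so that the ``padding ones'' bijection is exact and accounts for \emph{every} composition of $k$ (including those with no part $\ge2$, i.e. $\beta=(1^k)$, which should give the $m=0$ term $\frac{x^0}{1-A(x)}$). Everything else — the geometric-series manipulations, the extraction of $[z^k]$ via $\binom{k-|\alpha|}{m}$, and the multinomial expansion of $Q(x,z)^m$ into compositions — is routine. I would therefore structure the write-up as: (1) rewrite $\frac{1}{1-z}\cdot\frac{1}{1-P(x,z)} = \frac{1}{(1-A(x))(1-z)+Q(x,z)}$; (2) expand as a geometric series in $\frac{Q(x,z)}{(1-A(x))(1-z)}$; (3) extract $[z^k]$ using $Q(x,z)^m = \sum_{\alpha\in\Comp(\cdot,m)} x^{|\alpha|}z^{|\alpha|+m}$ and $[z^j]\frac{1}{(1-z)^{m+1}}=\binom{j+m}{m}$; (4) apply the ``subtract one from each part, delete the ones, remember where they were'' bijection to rewrite the double sum over $(m,\alpha)$ as a single sum over $\Comp(k)$, yielding the claimed formula.
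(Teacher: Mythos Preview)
Your overall strategy --- combine the two fractions, expand as a geometric series, extract $[z^k]$ --- is exactly the paper's approach, but there is a genuine algebraic slip in your step~(1) that derails everything after it. You claim
\[
  \frac{1}{1-P(x,z)} \;=\; \frac{1-z}{(1-z)(1-A(x))+Q(x,z)},
\]
but in fact $1-P(x,z)=1-\dfrac{A(x)-Q(x,z)}{1-z}=\dfrac{(1-z)-A(x)+Q(x,z)}{1-z}$, so the denominator is $(1-A(x))-z+Q(x,z)$, \emph{not} $(1-A(x))(1-z)+Q(x,z)$: there is no $A(x)z$ cross term. This is exactly the simplification the paper carries out, arriving at
\[
  \frac{1}{1-z}\cdot\frac{1}{1-P(x,z)}
  \;=\;\frac{1}{(1-A(x))-\bigl(z-Q(x,z)\bigr)}
  \;=\;\sum_{r\ge0}\frac{(z-Q(x,z))^r}{(1-A(x))^{1+r}}
  \;=\;\sum_{r\ge0}\frac{(z-xz^2-\cdots-x^kz^{k+1})^r}{(1-A(x))^{1+r}}.
\]
Now $[z^k]$ of the $r$-th summand picks out tuples $(\alpha_1,\dots,\alpha_r)\in\Comp(k,r)$ directly, each factor contributing a part (sign $-1$ and an extra factor $x^{\alpha_j-1}$ when $\alpha_j\ge2$), and the theorem falls out with no further bijection.

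The error matters, because after your (incorrect) expansion the denominator carries the exponent $m+1$ with $m=\ell_2(\beta)$, whereas the theorem demands exponent $1+\ell(\beta)$. Your ``pad with ones'' bijection correctly identifies pairs $(m,\alpha)$ with compositions $\beta$ of $k$ having $\ell_2(\beta)=m$ and $|\alpha|=k-\ell(\beta)$, so the $x$-power and the binomial $\binom{k-|\alpha|}{m}$ line up, but the power of $1-A(x)$ does not: your formula has $(1-A(x))^{-(m+1)}$ where the theorem has $(1-A(x))^{-(k-|\alpha|+1)}$, and these coincide only when $\beta$ has no parts equal to~$1$. So the final ``bookkeeping would come out right'' hope is unfounded; the discrepancy is exactly the spurious $A(x)z$ term introduced at the start. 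Fix the denominator and expand in $z-Q$ rather than in $Q/(1-z)$, and the whole argument collapses to the paper's three lines.
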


\begin{proof}
We will modify the right-hand side of \Cref{prop:1}.
We have
\begin{align*}
     \frac{1}{1-z} \cdot \frac{1}{1-\sum_{i=1}^k x^{i} \sum_{j=0}^{i} z^j } 
    &=   \frac{1}{1-z-
      \sum_{i=1}^k x^{i} \sum_{j=0}^{i} z^j + \sum_{i=1}^k x^{i} \sum_{j=0}^{i} z^{j+1} } \\
    &=   \frac{1}{1-z+
      \sum_{i=1}^k x^{i} (z^{i+1}-1)  } \\
    &=   \frac{1}{1-\sum_{i=1}^k x^{i}-z\left(1-\sum_{i=1}^k x^{i} z^{i}\right)} \\
    &= \frac{1}{1-\sum_{i=1}^k x^{i}}\cdot
      \frac{1}{1-z\left(1-\sum_{i=1}^k x^{i} z^{i}\right)
      \left(1-\sum_{i=1}^k x^{i}\right)^{-1}} \\
    &= \sum_{r\ge0} 
      \frac{z^r\left(1-\sum_{i=1}^k x^{i} z^{i}\right)^r}
      {\left(1-\sum_{i=1}^k x^{i}\right)^{1+r}}.
\end{align*}
Therefore by \Cref{prop:1}, we obtain
\begin{equation}\label{eq:1}
      \sum_{b\ge0} J(b,k,1)x^b 
    = [z^k] \left(
\sum_{r\ge0} \frac{(z-xz^2 - \cdots - x^kz^{k+1})^r}
      {(1-x - \cdots - x^{k})^{1+r}} \right).
\end{equation}

Observe that
\[
  (z-xz^2 - \cdots - x^kz^{k+1})^r 
  = \sum_{\alpha_1,\dots,\alpha_r\in [k+1]} z^{\alpha_1 + \cdots + \alpha_r}
  (-1)^{\ell_2(\alpha_1,\dots,\alpha_r)} x^{\alpha_1 + \cdots + \alpha_r - r}.
\]
Hence we can rewrite \eqref{eq:1} to obtain the result of the theorem.
\end{proof}

For small values of \( k \), one can easily find explicit formulas
for the generating function in \eqref{eq:7} using \Cref{thm:l=1} as
follows.

\begin{exam}
    If \( k=1 \) then \( \Comp(k) = \{(1)\} \). Thus we have
  \[
    \sum_{b\ge0} J(b,1,1)x^b = \frac{1}{(1-x)^2}.
  \]
\end{exam}

\begin{exam}\label{exa:3}
    If \( k=2 \) then \( \Comp(k) = \{(2),(1,1)\} \). Thus we have
  \[
   \sum_{b\ge 0}  J(b,2,1) x^b
   = \frac{-x}{(1-x-x^2)^2} + \frac{1}{(1-x-x^2)^3}
   = \frac{1-x+x^2+x^3}{(1-x-x^2)^3}.
 \]
 This proves the identity \eqref{eq:conj} conjectured by Butler et
 al.~\cite[Conjecture~13]{Butler2019}. The sequence
 \( \{J(b,2,1)\}_{b\ge0} \) is \href{https://oeis.org/A370304}{A370304}
in OEIS \cite{OEIS}:
 \[
   1, 2, 7, 17, 41, 91, 195, 403, 812, 1601, 3102, 5922, 11165, 20824, 38477,\ldots.
 \]
\end{exam}

\begin{exam}
    If \( k=3 \) then \( \Comp(k) = \{(3),(2,1),(1,2), (1,1,1)\} \). Thus we have
  \begin{align*}
    \sum_{b\ge 0}  J(b,3,1) x^b
    &= \frac{-x^2}{(1-x-x^2-x^3)^2} + 
      \frac{-2x}{(1-x-x^2-x^3)^3} +
      \frac{1}{(1-x-x^2-x^3)^4}\\
    &= \frac{1-2x+x^2+4x^3+3x^4-3x^6-2x^7-x^8}{(1-x-x^2-x^3)^4}.
 \end{align*}
The sequence \( \{J(b,3,1)\}_{b\ge0} \) is
\href{https://oeis.org/A370306}{A370306} in OEIS \cite{OEIS}:
 \[
1, 2, 7, 24, 70, 198, 532, 1370, 3418, 8296, 19677, 45770, 104687, 235972,\ldots.
 \]
\end{exam}

In fact, using \Cref{thm:l=1}, we can find an explicit formula for the
generating function in \eqref{eq:7} for a general capacity \( k \).

\begin{cor}\label{cor:l=1}
  For a positive integer \( k \), we have
  \[
    \sum_{b\ge0} J(b,k,1)x^b 
    = \sum_{r=1}^{k} \sum_{s=0}^{r}
    \frac{(-1)^{r-s} \binom{r}{s} \binom{k-r-1}{r-s-1} x^{k-r}}{(1-x - \cdots - x^{k})^{1+r}} ,
\]
where we extend the binomial coefficient by \( \binom{-1}{n} = \binom{n}{-1} =1\)
if \( n=-1 \) and \( \binom{-1}{n} = \binom{n}{-1} =0\) otherwise.
\end{cor}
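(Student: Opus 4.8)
The plan is to obtain the corollary purely by reindexing the sum over $\alpha\in\Comp(k)$ in \Cref{thm:l=1}, grouping the compositions according to two statistics: the length $r=\ell(\alpha)$ and the number $s$ of parts of $\alpha$ equal to $1$. Since every part of a composition is a positive integer, each part is either $1$ or at least $2$, so the number of parts of size at least $2$ is $\ell_2(\alpha)=r-s$. Consequently the summand $(-1)^{\ell_2(\alpha)}x^{k-\ell(\alpha)}/(1-x-\cdots-x^{k})^{1+\ell(\alpha)}$ in \Cref{thm:l=1} depends on $\alpha$ only through the pair $(r,s)$ and equals $(-1)^{r-s}x^{k-r}/(1-x-\cdots-x^{k})^{1+r}$.

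The only actual computation is an elementary count: the number of $\alpha\in\Comp(k)$ with $\ell(\alpha)=r$ and exactly $s$ parts equal to $1$ is $\binom{r}{s}\binom{k-r-1}{r-s-1}$. To see this, first choose which $s$ of the $r$ slots carry a $1$ (the factor $\binom{r}{s}$); the remaining $r-s$ parts are each at least $2$ and sum to $k-s$, so subtracting $1$ from each reduces the count to the number of compositions of $k-r$ into exactly $r-s$ positive parts, which is $\binom{(k-r)-1}{(r-s)-1}$ by stars and bars. Substituting this into \Cref{thm:l=1}, and noting that a length-$r$ composition of $k$ satisfies $1\le r\le k$ and that the number of $1$'s satisfies $0\le s\le r$, produces exactly the asserted double sum.

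The point that requires care — rather than difficulty — is matching the boundary cases to the stated conventions $\binom{-1}{-1}=1$, $\binom{m}{-1}=\binom{-1}{m}=0$ for $m\ge 0$. When $r=s$, the unique candidate is $(1^{k})$, which exists only for $r=k$; this agrees with $\binom{k-r-1}{-1}$ being $1$ precisely when $k-r-1=-1$. When $s=0$ and $r=k$ there is no composition of $k$ into $k$ parts all at least $2$, which matches $\binom{-1}{r-1}=0$ for $r\ge 1$, and this is exactly the boundary term that forces us to pin down $\binom{-1}{n}$ as well. Once these degenerate cases are checked against the enumeration above, the corollary follows at once. I anticipate no genuine obstacle: the argument is a reindexing of the sum in \Cref{thm:l=1} together with a one-line enumeration of compositions by length and by number of $1$'s.
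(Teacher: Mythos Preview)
Your proposal is correct and follows essentially the same approach as the paper: both proofs group the sum in \Cref{thm:l=1} by the length $r=\ell(\alpha)$ and the number $s$ of parts equal to $1$, then count such compositions as $\binom{r}{s}\binom{k-r-1}{r-s-1}$. Your write-up is slightly more detailed in justifying the count and in checking the boundary conventions, but the argument is the same.
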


\begin{proof}
We can rewrite \Cref{thm:l=1} as
\begin{equation}\label{eq:4}
      \sum_{b\ge0} J(b,k,1)x^b 
      =   \sum_{r=1}^{k} \frac{ x^{k-r}}{(1-x - \cdots - x^{k})^{1+r}}
      \sum_{\alpha\in \Comp(k,r)} (-1)^{\ell_2(\alpha)} .
\end{equation}
It is easy to see that the number of \( \alpha\in \Comp(k,r) \) with
exactly \( s \) parts equal to \( 1 \) is
\( \binom{r}{s} \binom{k-r-1}{r-s-1} \).
Hence, 
\begin{equation}\label{eq:5}
  \sum_{\alpha\in \Comp(k,r)} (-1)^{\ell_2(\alpha)}= \sum_{s=0}^{r}
  (-1)^{r-s} \binom{r}{s} \binom{k-r-1}{r-s-1}.
\end{equation}
By combining equations \eqref{eq:4} and \eqref{eq:5} we obtain the desired formula.
\end{proof}

\begin{remark}
Note that the right-hand side of \eqref{eq:5}, say
\[
  C(k,r) = \sum_{s=0}^{r} (-1)^{r-s} \binom{r}{s} \binom{k-r-1}{r-s-1}
\]
can be defined for any nonnegative integers \( k \) and \( r \).
Since \( \binom{-n}{k} = (-1)^{k} \binom{n+k-1}{k} \), we have, for
\( k<r \),
\[
  C(k,r) = - \sum_{s=0}^{r} \binom{r}{s} \binom{2r-k-s-1}{r-s-1}
 = - \sum_{s=1}^{r} \binom{r}{s} \binom{r-k-1}{s-1}.
\]
This shows that, for \( 1\le k<r \), we have
\( C(k,r) = -A(r,r-k+1) \), where \( A(n,k) \) is the sequence
\href{https://oeis.org/A050143}{A050143} in OEIS \cite{OEIS}. We note
that the sequence \( A(n,k) \) has an interpretation using certain
lattice paths; see \cite{OEIS}.
\end{remark}

\section{Juggling card sequences with fixed capacity}
\label{sec:generating-function-kl}

In this section we find an expression for the generating function for
\( J(b,k,\ell) \) with fixed capacity \( k \) and length~\( \ell \).
To this end we introduce \( (b,k,\ell) \)-embeddings, which generalize
the notion of \( (b,k) \)-embeddings of cards to \( \ell \)-card
sequences. In order to analyze \( (b,k,\ell) \)-embeddings, we then
introduce an operator which generalizes the \( q \)-derivative
operator.

Observe that the \( (b,k) \)-embedding of a card keeps track of when
balls are thrown. In what follows we extend this notion to
\( \ell \)-card sequences.

Let \( (C_1,\dots,C_\ell)\in J(b,k,\ell) \). For
\( i = 0,1,\dots,\ell \), we will define a sequence \( \alpha^{(i)} \) of
words of the form \( 0^{c_1} 1^{c_1} \cdots i^{c_i} \) as follows.

\begin{itemize}
\item First, we define \( \alpha^{(0)}=(0^{a_1},\dots,0^{a_s}) \), where
  \( (a_1,\dots,a_s) \) is the departure composition of \( C_1 \).
\item For \( i\in [\ell] \), suppose \( (\beta_1,\dots,\beta_t) \) is
  the \( (b,k) \)-embedding of \( C_i \). Then each \( \beta_j \) is a
  word of the form \( 0^u1^v \).
\begin{description}
\item[Case 1] There are no \( 1 \)'s in
  \( \beta_1,\dots,\beta_t \). In this case we define
  \( \alpha^{(i)} = \alpha^{(i-1)} \).
\item[Case 2] There is at least one \( 1 \) in
\( \beta_1,\dots,\beta_t \).
 Then, first, we replace each \( 1 \) by
\( i \) in \( \beta_1,\dots,\beta_t \). Let
\( \beta_{d_1},\dots,\beta_{d_m} \) be the words among
\( \beta_1,\dots,\beta_t \) containing at least one \( 0 \), where
\( d_1<\dots<d_m \). Then \( \alpha^{(i-1)} \) must have \( m+1 \)
words. Let
\( \alpha^{(i-1)} = (\alpha^{(i-1)}_1,\dots,\alpha^{(i-1)}_{m+1}) \).
For each \( j\in [m] \), we replace the subword of \( \beta_{d_j} \)
consisting of zeros by \( \alpha^{(i-1)}_{j+1} \).
\end{description}
\end{itemize}

\begin{exam}
  Let \( (C_1,\dots,C_4) \) be the \( 4 \)-card sequence in
  \Cref{fig:card-seq}. Then we have \( \alpha^{(0)}=(0000,00,000) \),
  \( \alpha^{(1)}=(0011,000,11) \), \( \alpha^{(2)}=(22,000,112,2) \),
  \( \alpha^{(3)}=(22,000,112,2) \), and
  \( \alpha^{(4)}=(0004,112,2,4) \). See \Cref{fig:embedding2}.
\end{exam}

Note that a 0 in \(\alpha^{(i)}\) means that the associated ball came
from the original set of balls whereas a \(j\) in \(\alpha^{(i)}\) means
that the associated ball was thrown up by card \(C_j\).

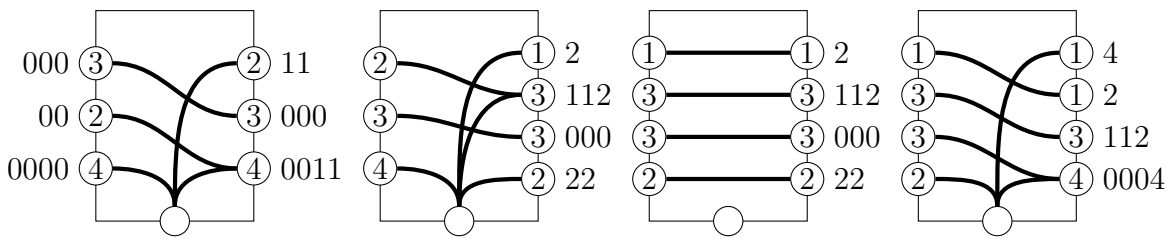
\begin{figure}
    \centering
\begin{tikzpicture}
\begin{scope}[scale=.7]
  \draw (0,0) rectangle (3,4);
  \node[fill=white,draw=black,circle] (0) at (1.5,0) {};
  \LB414 \LB224 \LB334 \RB414 \RB324 \RB234
  \edge21 \edge32 \drop1 \throw1 \throw3
  \LW{0000}14 \LW{00}24 \LW{000}34
  \RW{0011}14 \RW{000}24 \RW{11}34
\end{scope}
\end{tikzpicture}
\begin{tikzpicture}
\begin{scope}[scale=.7]
  \draw (0,0) rectangle (3,4);
  \node[fill=white,draw=black,circle] (0) at (1.5,0) {};
  \LB414 \LB324 \LB234 \RB215 \RB325 \RB335 \RB145
  \edge22 \edge33 \drop1 \throw1 \throw3 \throw4
  \RW{22}15 \RW{000}25 \RW{112}35 \RW{2}45
\end{scope}
\end{tikzpicture}
\begin{tikzpicture}
\begin{scope}[scale=.7]
  \draw (0,0) rectangle (3,4);
  \node[fill=white,draw=black,circle] (0) at (1.5,0) {};
  \LB215 \LB325 \LB335 \LB145 \RB215 \RB325 \RB335 \RB145
  \edge11 \edge22 \edge33 \edge44
  \RW{22}15 \RW{000}25 \RW{112}35 \RW{2}45
\end{scope}
\end{tikzpicture}
\begin{tikzpicture}
\begin{scope}[scale=.7]
  \draw (0,0) rectangle (3,4);
  \node[fill=white,draw=black,circle] (0) at (1.5,0) {};
  \LB215 \LB325 \LB335 \LB145 \RB415 \RB325 \RB135 \RB145
  \edge21 \edge32 \edge43 \drop1 \throw1 \throw4
  \RW{0004}15 \RW{112}25 \RW{2}35 \RW{4}45
\end{scope}
\end{tikzpicture}
\caption{A 4-card sequence \( (C_1,C_2,C_3,C_4) \) with the data
  \( (\alpha^{(0)}, \alpha^{(1)}, \alpha^{(2)}, \alpha^{(3)},
  \alpha^{(4)}) \) of when balls are thrown. Here \( \alpha^{(i-1)} \)
  and \( \alpha^{(i)} \) are shown
on the left and on the right, respectively, of each card \(C_i\).}
     \label{fig:embedding2}
\end{figure}

The \( \ell \)-card sequence \( (C_1,\dots,C_\ell) \) can be recovered
from \( (\alpha^{(0)},\dots,\alpha^{(\ell)}) \) because the
\( (b,k) \)-embedding of \( C_i \) is obtained from \( \alpha^{(i)} \)
by replacing every integer less than \( i \) by \( 0 \) and every
\( i \) by \( 1 \). Moreover, \( \alpha^{(i-1)} \) is also determined
by \( \alpha^{(i)} \) and the history of the balls thrown at beat
\( i \). More precisely, let \( \delta_i \) be the first word in
\( \alpha^{(i-1)} \) if there is at least one \( i \) in
\( \alpha^{(i)} \), and let \( \delta_i=\emptyset \) otherwise. Then
\( \alpha^{(i-1)} \) is determined by \( \alpha^{(i)} \) and
\( \delta_i \) as follows. If \( \delta_i=\emptyset \), then
\( \alpha^{(i-1)} = \alpha^{(i)} \). Otherwise, \( \alpha^{(i-1)} \)
is obtained from \( \alpha^{(i)} \) by deleting all \( i \)'s,
discarding all empty words if there are any, and adding \( \delta_i \)
at the beginning.

Applying this process iteratively, the whole sequence
\( (\alpha^{(0)},\dots,\alpha^{(\ell)}) \) is determined by the pair
\( (\gamma,\delta) \), where \( \gamma=\alpha^{(\ell)} \) and
\( \delta = (\delta_1,\dots,\delta_\ell) \). Observe that the length
of the word \( \delta_i \) is equal to the total number of \( i \)'s
in \( \alpha^{(i)} \), which is also equal to the total number of
\( i \)'s in \( \delta_{i+1},\dots,\delta_\ell \), and
\( \alpha^{(\ell)} \).

\begin{exam}
  The 4-card sequence in \Cref{fig:embedding2} corresponds to the pair
  \( (\gamma,\delta) \), where \( \gamma = (0004,112,2,4) \), and
  \( \delta=(0000,0011,\emptyset,22) \).
\end{exam}

The above observations imply that the \( \ell \)-card sequence
\( (C_1,\dots,C_\ell) \) can be identified with the pair
\( (\gamma,\delta) \). This leads us to the following definition.

\begin{defn}
  A \emph{\( (b,k,\ell) \)-embedding} is a pair \( (\gamma,\delta) \)
  of two sequences \( \gamma = (\gamma_1,\dots,\gamma_{r}) \) and
  \( \delta= (\delta_1,\dots,\delta_{\ell}) \) satisfying the
  following conditions:
  \begin{itemize}
  \item Each \( \gamma_i \) is a word of the form
    \( 0^{c_0} 1^{c_1} \cdots \ell^{c_\ell} \) for some integers
    \( c_0, c_1 ,\ldots, c_\ell \ge0 \) with
    \( 1\le c_0+c_1 + \cdots + c_\ell \le k \).
  \item The sum of the lengths of \( \gamma_i \) for all \( i\in [r] \)
    is equal to \( b \).
  \item Each \( \delta_i \) is a (possibly empty) word of the form
    \( 0^{d_0} 1^{d_1} \cdots (i-1)^{d_{i-1}} \) for some integers
    \( d_0,d_1 ,\ldots, d_{i-1} \ge0 \) such that
    \( 0\le d_0+d_1 + \cdots + d_{i-1}\le k \) and
    \( d_0+d_1 + \cdots + d_{i-1} \) is equal to the total number of
    \( i \)'s in \( \gamma_1 ,\dots,\gamma_r \) and
    \( \delta_{i+1},\delta_{i+2},\dots,\delta_{\ell} \).
  \end{itemize}
  Let \( \EE(b,k,\ell) \) be the set of \( (b,k,\ell) \)-embeddings.
\end{defn}

By the observations above, the map
\( (C_1,\dots,C_\ell)\mapsto (\gamma,\delta) \) is a bijection from
\( \JJ(b,k,\ell) \) to \( \EE(b,k,\ell) \). In order to enumerate
\( \EE(b,k,\ell) \) we need some definitions.

For a nonnegative integer \( n \), the \emph{complete homogeneous symmetric
  function} \(h_n(x_1,\dots,x_m)\) is defined by
\[
  h_n(x_1, \dots, x_m) = \sum_{i_1 +\cdots +i_m=n} x_1^{i_1} \cdots x_m^{i_m},
\]
where the sum is over all \( m \)-tuples \( (i_1,\dots,i_m) \) of
nonnegative integers summing to \( n \). Note that
we have \( h_0(x_1, \dots, x_m) = 1 \).

\begin{defn}\label{def:1}
  For indeterminates \( z_1,\dots,z_k \), we denote by
  \( D_{z_1,\dots,z_{k}} \) the linear operator on the space of
  formal power series in \( z_k \) defined by
\[
  D_{z_1,\dots,z_{k}} z_k^n
  = h_n(1,z_1,\dots,z_{k-1}) z_k^n.
\]
\end{defn}

Note that this can be seen as a generalization of the
\emph{\( q \)-derivative operator} \( (\frac{d}{dz})_q \), which is the
linear operator on the space of formal power series in \( z \) defined
by
\[
  \left( \frac{d}{dz} \right)_q z^n  = (1+q + \cdots + q^{n-1}) z^{n-1}.
\]
Hence \( D_{q,z} \) is equal to the operator \( (\frac{d}{dz})_q z  \),
which multiplies \( z \) and then takes the \( q \)-derivative.

Now we are ready to find an expression for the generating function for
\( J(b,k,\ell) \) when \( k \) and \( \ell \) are fixed.

\begin{thm}\label{thm:3}
  For fixed positive integers \( k \) and \( \ell \), we have
\begin{multline*}
  \sum_{b\ge0} J(b,k,\ell) x^b\\
  = [z_1^k \cdots z_\ell^k]
  \left(   \frac{1}{1-z_1} \cdots \frac{1}{1-z_\ell}
  D_{z_1,z_2}D_{z_1,z_2,z_3} \cdots D_{z_1,\dots,z_\ell} \frac{1}{2-h_k(1,x,xz_1,\dots,xz_\ell)}
 \right) .
\end{multline*}
\end{thm}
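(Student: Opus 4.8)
The strategy is to enumerate the set $\EE(b,k,\ell)$ of $(b,k,\ell)$-embeddings directly, mirroring the proof of \Cref{prop:1} but now tracking, for each label $i \in \{0,1,\dots,\ell\}$, how many symbols bear that label. A $(b,k,\ell)$-embedding is a pair $(\gamma,\delta)$, and the bijection $(C_1,\dots,C_\ell) \mapsto (\gamma,\delta)$ established above reduces the problem to a weighted count of such pairs. I would assign to each word $0^{c_0}1^{c_1}\cdots \ell^{c_\ell}$ appearing in $\gamma$ the weight $x^{c_0+c_1+\cdots+c_\ell} z_1^{c_1} \cdots z_\ell^{c_\ell}$, so that summing over all legal words (those with $1 \le c_0+\cdots+c_\ell \le k$) gives a factor built from $h_k(1,x,xz_1,\dots,xz_\ell)$ — the homogeneous symmetric function appears precisely because each of the $1 + (\ell+1)$ "letter types" $0,1,\dots,\ell$ in a word contributes a geometric-type generating function, and the constraint $c_0+\cdots+c_\ell \le k$ is captured by extracting the part of total degree $\le k$. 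The sequence $\gamma = (\gamma_1,\dots,\gamma_r)$ of arbitrary length $r \ge 0$ then contributes a factor $1/(1 - (\text{word generating function}))$, which after the usual algebraic manipulation (as in the passage from \Cref{prop:1} to the $1/(1-z)$-shifted form) becomes $1/(2 - h_k(1,x,xz_1,\dots,xz_\ell))$.

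**Incorporating $\delta$ via the operators $D$.**
The subtle part is the role of $\delta = (\delta_1,\dots,\delta_\ell)$ and the constraint that the length of $\delta_i$ equals the total number of $i$'s appearing in $\gamma_1,\dots,\gamma_r$ and in $\delta_{i+1},\dots,\delta_\ell$. I would process the $\delta_i$ in decreasing order of $i$: first $\delta_\ell$ must have length equal to the number of $\ell$'s already present (in $\gamma$ alone, since no $\delta_j$ with $j>\ell$ exists), then $\delta_{\ell-1}$ must have length equal to the number of $(\ell-1)$'s in $\gamma$ and in $\delta_\ell$, and so on. Each $\delta_i$ is itself a word $0^{d_0}1^{d_1}\cdots(i-1)^{d_{i-1}}$, so once its \emph{length} $n$ is fixed, the choices for $\delta_i$ contribute $h_n(1,z_1,\dots,z_{i-1})$ (weighting a $j$-symbol in $\delta_i$ by $z_j$, with the $0$-symbols carrying weight $1$), and its length being exactly the current exponent of $z_i$ is exactly the defining relation $D_{z_1,\dots,z_i} z_i^n = h_n(1,z_1,\dots,z_{i-1}) z_i^n$. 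So applying $D_{z_1,\dots,z_i}$ accounts for choosing $\delta_i$, while simultaneously the new $z_1,\dots,z_{i-1}$-content injected by $\delta_i$ augments the bookkeeping for the still-to-be-chosen $\delta_{i-1},\dots,\delta_1$. Because we process $i = \ell, \ell-1, \dots, 2$ (the word $\delta_1$, of the form $0^{d_0}$, contributes trivially — which is why the product starts at $D_{z_1,z_2}$), the composition $D_{z_1,z_2}\,D_{z_1,z_2,z_3}\cdots D_{z_1,\dots,z_\ell}$ appears in exactly the stated order, applied to the $\gamma$-generating function $1/(2-h_k(1,x,xz_1,\dots,xz_\ell))$.

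**Finishing.**
After the operators are applied, there remains the constraint $d_0 + \cdots + d_{i-1} \le k$ on each $\delta_i$ (the "capacity" bound on the number of $i$'s), and likewise the total number of $1$'s in $\gamma$ must respect the capacity; these are all of the form "truncate a power series in $z_i$ to degree $\le k$," which is implemented by multiplying by $1/(1-z_i)$ and extracting the coefficient of $z_i^k$ — exactly as in the identity $\sum_{s=0}^k [z^s] F(z) = [z^k]\bigl(\tfrac{1}{1-z}F(z)\bigr)$ used in \Cref{prop:1}. Doing this for each $i = 1,\dots,\ell$ and taking $[z_1^k\cdots z_\ell^k]$ of the product with $\prod_{i=1}^\ell \frac{1}{1-z_i}$ yields the claimed formula. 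The main obstacle I anticipate is \emph{verifying that the operators commute with coefficient extraction and geometric-series expansion in the right way} — specifically, that applying $D_{z_1,\dots,z_i}$ to a series already involving $z_1,\dots,z_{i-1}$ genuinely produces the correct weighted sum over choices of $\delta_i$ together with the correct re-weighting of the lower variables, and that the order of the non-commuting operators $D$ matches the order in which the $\delta_i$ are determined. Carefully setting up a single master generating-function identity, term by term over the data $(\gamma, \delta_\ell, \delta_{\ell-1}, \dots, \delta_1)$, and checking that each step matches one operator application should resolve this; the rest is the routine algebra already rehearsed in \Cref{prop:1} and \Cref{thm:l=1}.
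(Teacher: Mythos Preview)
Your proposal is correct and follows essentially the same route as the paper's proof: weight words in $\gamma$ by $x^{\len}\,z_1^{c_1}\cdots z_\ell^{c_\ell}$ to obtain the factor $1/(2-h_k(1,x,xz_1,\dots,xz_\ell))$, then process $\delta_\ell,\delta_{\ell-1},\dots,\delta_2$ in that order via the operators $D_{z_1,\dots,z_\ell},\dots,D_{z_1,z_2}$ (with $\delta_1$ contributing trivially), and finally impose the capacity bounds $\le k$ on each $z_i$-degree by the $\frac{1}{1-z_i}$-and-extract trick. The paper carries out exactly this computation, writing the intermediate sums over sets $X_\ell,X_{\ell-1},\dots$ of tuples $(w_1,\dots,w_r,u_\ell,\dots)$ and verifying term by term that each operator application adjoins one more $u_j$; your anticipated ``main obstacle'' is handled there in precisely the inductive fashion you describe.
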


\begin{proof}
  We proceed similarly as in the proof of \Cref{prop:1}.
  Let \( W \) be the set of words of the form
  \( 0^{c_0} 1^{c_1} \cdots \ell^{c_\ell} \) for some integers
  \( c_0, c_1 ,\ldots, c_\ell \ge0 \) with
  \( 1\le c_0+c_1 + \cdots + c_\ell \le k \). For a word
  \( w= 0^{c_0} 1^{c_1} \cdots \ell^{c_\ell} \in W \), we define
  \( \len(w) = c_0 + \cdots + c_\ell \) and
  \[
    \wt(w) = z_1^{c_1} \cdots z_\ell^{c_\ell}.
  \]
  Then we have
  \[
    \sum_{w\in W} x^{\len(w)} \wt(w) = \sum_{1\le c_0 + \cdots + c_\ell\le k}
  x^{c_0 + \cdots + c_\ell} z_1^{c_1} \cdots z_\ell^{c_\ell}
  = h_k(1,x,xz_1,\dots,xz_\ell) -1.
  \]
  Therefore we obtain
  \begin{equation}\label{eq:A}
   A:= \frac{1}{2-h_k(1,x,xz_1,\dots,xz_\ell)} = 1+ \sum_{r\ge1} \sum_{(w_1,\dots,w_r)\in W^r}
    x^{\len(w_1) + \cdots + \len(w_r)} \wt(w_1) \cdots \wt(w_r).
  \end{equation}

  First, we consider how the operator \( D_{z_1,\dots,z_\ell} \) acts
  on a monomial in the right-hand side of \eqref{eq:A}. For
  \( (w_1,\dots,w_r)\in W^r \), we have
  \[
    x^{\len(w_1) + \cdots + \len(w_r)}  \wt(w_1) \cdots \wt(w_r)
    = x^b z_1^{n_1} \cdots z_\ell^{n_\ell}
  \]
  for some integers \( b \ge1 \) and \( n_1,\dots,n_\ell\ge0 \). Then
  \begin{align*}
    D_{z_1,\dots,z_\ell} x^b z_1^{n_1} \cdots z_\ell^{n_\ell}
    &=  x^b z_1^{n_1} \cdots z_\ell^{n_\ell} h_{n_\ell}(1,z_1,\dots,z_{\ell-1})\\
    &= x^b z_1^{n_1} \cdots z_\ell^{n_\ell} \sum_{c_0 + \cdots + c_{\ell-1}=n_\ell}z_1^{c_1} \cdots
    z_{\ell-1}^{c_{\ell-1}} .
  \end{align*}
  This implies that
  \begin{equation}\label{eq:A2}
    D_{z_1,\dots,z_\ell} A
    = 1 + \sum_{(w_1,\dots,w_r,u_\ell)\in X_\ell}  
    x^{\len(w_1) + \cdots + \len(w_r)}  \wt(w_1) \cdots \wt(w_r) \wt(u_\ell),
  \end{equation}
  where \( X_\ell \) is the set of tuples \( (w_1,\dots,w_r,u_\ell) \)
  such that \( w_i\in W \) for \( i\in [r] \) and \( u_\ell \) is a
  word of the form \( 0^{c_0} \cdots (\ell-1)^{c_{\ell-1}} \)
  for some nonnegative integers \( c_0,\dots,c_{\ell-1} \) with the
  condition that \( c_0 + \cdots + c_{\ell-1} \) is equal to the total
  number of \( \ell \)'s in \( w_1,\dots,w_r \).

  Next, we consider how the operator \( D_{z_1,\dots,z_{\ell-1}} \) acts
  on a monomial in the right-hand side of \eqref{eq:A2}. For
  \( (w_1,\dots,w_r,u_\ell)\in X_\ell \), we have
  \[
    x^{\len(w_1) + \cdots + \len(w_r)}  \wt(w_1) \cdots \wt(w_r) \wt(u_\ell)
    = x^b z_1^{n_1} \cdots z_\ell^{n_\ell}
  \]
  for some integers \( b \ge1 \) and \( n_1,\dots,n_\ell\ge0 \).
  Then we have
  \begin{align*}
    D_{z_1,\dots,z_{\ell-1}} x^b z_1^{n_1} \cdots z_\ell^{n_\ell}
    &=  x^b z_1^{n_1} \cdots z_\ell^{n_\ell} h_{n_{\ell-1}}(1,z_1,\dots,z_{\ell-2})\\
    &= x^b z_1^{n_1} \cdots z_\ell^{n_\ell} \sum_{c_0 + \cdots + c_{\ell-2}=n_{\ell-1}}z_1^{c_1} \cdots
    z_{\ell-1}^{c_{\ell-1}} .
  \end{align*}
  This implies that
  \begin{multline*}
    D_{z_1,\dots,z_{\ell-1}} D_{z_1,\dots,z_\ell} A\\
    = 1 + \sum_{(w_1,\dots,w_r,u_\ell,u_{\ell-1})\in X_{\ell-1}}  
    x^{\len(w_1) + \cdots + \len(w_r)}  \wt(w_1) \cdots \wt(w_r) \wt(u_\ell) \wt(u_{\ell-1}),
  \end{multline*}
  where \( X_{\ell-1} \) is the set of tuples
  \( (w_1,\dots,w_r,u_\ell,u_{\ell-1}) \) such that
  \( (w_1,\dots,w_r,u_\ell)\in X_{\ell} \) and \( u_{\ell-1} \) is a
  word of the form \( 0^{c_0} \cdots (\ell-2)^{c_{\ell-2}} \) for some
  nonnegative integers \( c_0,\dots,c_{\ell-2} \) with the condition
  that \( c_0 + \cdots + c_{\ell-2} \) is equal to the total number of
  \( (\ell-1) \)'s in \( w_1,\dots,w_r, u_\ell \).

  Applying the above argument iteratively, we obtain that
  \begin{multline}\label{eq:A3}
    D_{z_1,z_2} \cdots D_{z_1,\dots,z_\ell} A\\
    = 1 + \sum_{(w_1,\dots,w_r,u_\ell,\dots,u_1)\in X_1}
    x^{\len(w_1) + \cdots + \len(w_r)}  \wt(w_1) \cdots \wt(w_r) \wt(u_\ell) \cdots \wt(u_1),
  \end{multline}
  where \( X_1 \) is the set of tuples
  \( (w_1,\dots,w_r,u_\ell,\dots,u_1) \) such that for each
  \( i\in [r] \), \( w_i\in W \) and for each \( j\in[\ell] \),
  \( u_{j} \) is a word of the form
  \( 0^{c_0} \cdots (j-1)^{c_{j-1}} \) for some nonnegative integers
  \( c_0,\dots,c_{j-1} \) with the condition that
  \( c_0 + \cdots + c_{j-1} \) is equal to the total number of
  \( j \)'s in \( w_1,\dots,w_r, u_\ell,\dots,u_{j+1} \). By equation
  \eqref{eq:A3}, we have
  \begin{equation}\label{eq:8}
     [z_1^k \cdots z_\ell^k]
  \left(   \frac{1}{1-z_1} \cdots \frac{1}{1-z_\ell}
  D_{z_1,z_2}D_{z_1,z_2,z_3} \cdots D_{z_1,\dots,z_\ell} A
\right) = 1 + \sum_{b\ge1} |Y_b| x^b,
  \end{equation}
  where \( Y_b \) is the set of tuples
  \( (w_1,\dots,w_r,u_\ell,\dots,u_1)\in X_1 \) such that
  \[
    x^{\len(w_1) + \cdots + \len(w_r)}\wt(w_1) \cdots \wt(w_r) \wt(u_\ell) \cdots \wt(u_1) = 
   x^b z_1^{n_1} \cdots z_\ell^{n_\ell}
  \]
  for some integers \( 0\le n_1,\dots,n_\ell\le k \).

  It is immediate from the definitions of \( Y_b \) and
  \( \EE(b,k,\ell) \) that the map 
  \[
    (w_1,\dots,w_r,u_\ell,\dots,u_1)\mapsto (\gamma,\delta),
  \]
 where
  \( \gamma=(w_1,\dots,w_r) \) and \( \delta=(u_1,\dots,u_\ell) \), is
  a bijection from \( Y_b \) to \( \EE(b,k,\ell) \).
  Therefore we conclude
  \[
    1 + \sum_{b\ge1} |Y_b| x^b = 1 + \sum_{b\ge1} |\EE(b,k,\ell)| x^b
    = \sum_{b\ge0} J(b,k,\ell)x^b, 
  \]
 which completes the proof. 
\end{proof}

\section{Rationality of the generating function}
\label{sec:rationality}

In this section, as a consequence of \Cref{thm:3}, we show that the
generating function for \( J(b,k,\ell) \) with fixed \( k \) and
\( \ell \) is a rational function. This is equivalent to the statement
that the sequence \( \{J(b,k,\ell)\}_{b\ge0} \) satisfies a linear
recurrence relation; see \cite[Theorem~4.1.1]{EC1}.

We first review some basic properties of derivatives and
\( q \)-derivatives and extend these properties to the operator
\( D_{z_1,\dots,z_m} \). By the quotient rule in calculus, one can
easily deduce that the derivative of a rational function is also a
rational function. For the \( q \)-derivative, it is well known
\cite[Equation (11.4.1)]{Ismail} that
\[
 \left(\frac{d}{d x}\right)_q f(x)=\frac{f(x)-f(qx)}{x-qx}.
\]
This implies that the \( q \)-derivative of a rational function is
also a rational function. Note that since
\[
  D_{z_1,z_2} z_2^n = (1+z_1 + \cdots + z_1^{n}) z_2^n
  = \frac{(1-z_1^{n+1})z_2^n}{1-z_1},
\]
we have
\begin{equation}\label{eq:9}
  D_{z_1,z_2}f(z_2) = \frac{f(z_2)-z_1f(z_1z_2)}{1-z_1}.
\end{equation}
Hence, if \( f(z_2) \) is a rational function in \( z_2 \), then so is
\( D_{z_1,z_2}f(z_2) \).

Our strategy is to show that the operator \( D_{z_1,\dots,z_m} \), for
any \( m\ge2 \), also preserves the rationality of a formal power
series. To do this, we need the following two lemmas.

\begin{lem}\label{lem:1}
  For integers \( n\ge0 \) and \( m\ge2 \), we have
\[
  h_n(1,z_1,\dots,z_{m}) =
  \frac{z_{m-1}h_n(1,z_1,\dots,z_{m-1}) - z_m h_n(1,z_1,\dots,z_{m-2},z_{m})}{z_{m-1}-z_{m}}.
\]
\end{lem}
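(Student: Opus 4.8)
The identity to be proved is a symmetric-function identity of the same shape as the standard "divided difference" recursion. I would prove it by a direct algebraic manipulation, using the explicit description of $h_n$ as a sum of monomials together with the elementary observation that $h_n$ is essentially a complete homogeneous polynomial in its arguments. The cleanest route is to introduce the abbreviations $P = h_n(1,z_1,\dots,z_{m-1})$ (not containing $z_m$) and $Q = h_n(1,z_1,\dots,z_{m-2},z_m)$ (not containing $z_{m-1}$), and to show that $h_n(1,z_1,\dots,z_m)$, which is symmetric in $z_{m-1}$ and $z_m$, satisfies
\[
  (z_{m-1}-z_m)\, h_n(1,z_1,\dots,z_m) = z_{m-1}P - z_m Q.
\]

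**Key steps.** First I would fix the "spectator" variables $1,z_1,\dots,z_{m-2}$ and regard everything as a polynomial in the two distinguished variables $u := z_{m-1}$ and $v := z_m$. Writing $H(u,v) := h_n(1,z_1,\dots,z_{m-2},u,v)$, the generating-function identity $\sum_{n\ge0} h_n(y_1,\dots,y_r) t^n = \prod_i (1-y_i t)^{-1}$ gives
\[
  \sum_{n\ge0} H(u,v)\, t^n = \frac{G(t)}{(1-ut)(1-vt)},
  \qquad
  G(t) := \prod_{i}\frac{1}{1-y_i t},
\]
where the $y_i$ run over $1,z_1,\dots,z_{m-2}$. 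The second step is the partial-fraction identity
\[
  \frac{1}{(1-ut)(1-vt)} = \frac{1}{u-v}\left( \frac{u}{1-ut} - \frac{v}{1-vt} \right),
\]
valid as rational functions in $t$ (and, after clearing the denominator $u-v$, as a polynomial identity so there is no division issue). Multiplying by $G(t)$ and extracting the coefficient of $t^n$ on both sides, the right-hand side becomes $\frac{1}{u-v}\bigl(u\,h_n(1,z_1,\dots,z_{m-2},u) - v\,h_n(1,z_1,\dots,z_{m-2},v)\bigr)$, since $[t^n]\,\dfrac{G(t)}{1-ut} = h_n(1,z_1,\dots,z_{m-2},u)$ by the same product formula. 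Substituting back $u = z_{m-1}$, $v = z_m$ yields exactly the claimed formula. The case $n=0$ (both sides equal $1$) and the degenerate specialization $z_{m-1}=z_m$ (where the right-hand side is read as a limit / the removable singularity cancels) should be noted but are immediate.

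**Alternative and the main obstacle.** If one prefers to avoid generating functions entirely, the same identity follows by the monomial count: $h_n(1,z_1,\dots,z_m) = \sum_{a+b\le n} h_{n-a-b}(1,z_1,\dots,z_{m-2})\, z_{m-1}^a z_m^b$, and one checks that $\sum_{a+b\le n} z_{m-1}^a z_m^b = \dfrac{z_{m-1}\,h_n(z_{m-1}) - z_m\,h_n(z_m)}{z_{m-1}-z_m}$ where $h_n(w)$ here is shorthand for $1 + w + \cdots + w^n$; this is just the finite-geometric-series identity $\sum_{a+b\le n} u^a v^b = \frac{u^{n+1}(u-\,\cdot\,) \dots}{\dots}$ rearranged, and then one multiplies through by $h_{n-a-b}(1,\dots,z_{m-2})$ and re-sums. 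Honestly there is no serious obstacle here: the only thing requiring a little care is bookkeeping the two distinguished variables consistently and making sure the division by $z_{m-1}-z_m$ is legitimate — which it is, because after clearing denominators both sides are genuine polynomial identities, so the formula holds as an identity of rational functions and, by continuity/specialization, remains correct when $z_{m-1}=z_m$. I would present the generating-function proof as the main argument since it is the shortest and makes the symmetry transparent.
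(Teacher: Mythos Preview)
Your generating-function argument is correct and complete; the partial-fraction identity
\(\frac{1}{(1-ut)(1-vt)} = \frac{1}{u-v}\bigl(\frac{u}{1-ut} - \frac{v}{1-vt}\bigr)\)
is exactly what is needed, and extracting the coefficient of \(t^n\) after multiplying by \(G(t)\) gives the lemma immediately. The paper, however, proceeds by a direct monomial argument rather than via generating functions: it multiplies both sides by \(z_{m-1}-z_m\), expands \((z_{m-1}-z_m)h_n(1,z_1,\dots,z_m)\) as a difference of two sums over tuples \((i_0,\dots,i_m)\) of size \(n+1\) with \(i_{m-1}\ge 1\) respectively \(i_m\ge 1\), and observes that after cancellation only the tuples with \(i_m=0\) (giving \(z_{m-1}h_n(1,z_1,\dots,z_{m-1})\)) or \(i_{m-1}=0\) (giving \(z_m h_n(1,z_1,\dots,z_{m-2},z_m)\)) survive. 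Your route is slicker and makes the symmetric-function provenance of the identity transparent, but it presupposes the product formula for \(\sum_n h_n t^n\); the paper's argument is more elementary and entirely self-contained, at the cost of a bit more bookkeeping with index sets. Your ``alternative'' sketch via \(\sum_{a+b\le n} u^a v^b\) is in spirit closer to what the paper does, though the paper organizes the cancellation slightly differently.
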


\begin{proof}
  Let \( \mathbb{R}[z_1,\dots,z_k] \) denote the space of polynomials in the variables \( z_1,\dots,z_k \).
 Consider the linear operator \(\Delta: \mathbb{R}[x] \longrightarrow \mathbb{R}[x, y]\) defined by
\[
\Delta(p(x))=\frac{x \cdot p(x)-y \cdot p(y)}{x-y} .
\]
Note that \(\Delta\left(x^n\right)=\sum_{i+j=n} x^i y^j\). Hence, by
letting \( x=z_{m-1} \) and \( y=z_m \) and applying this operator to
\(h\left(z_1, \ldots, z_{m-1}\right)\), we obtain the desired
identity.
\end{proof}

\begin{lem}\label{lem:2}
  For an integer \( m\ge2 \), we have
  \[
  D_{z_1,\dots,z_{m+1}}
  = \frac{z_{m-1}D_{z_1,\dots,z_{m-1},z_{m+1}} - z_m D_{z_1,\dots,z_{m-2},z_m,z_{m+1}}}{z_{m-1}-z_{m}} .
\]
\end{lem}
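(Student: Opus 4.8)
The plan is to prove the operator identity by checking it on the monomial basis $z_{m+1}^n$ for every $n \ge 0$; since both sides are linear operators on the space of formal power series in $z_{m+1}$, equality on all such monomials suffices. Applying the left-hand side, the definition of $D_{z_1,\dots,z_{m+1}}$ gives
\[
  D_{z_1,\dots,z_{m+1}} z_{m+1}^n = h_n(1,z_1,\dots,z_m) z_{m+1}^n .
\]
Applying the right-hand side, I would compute each term separately: $D_{z_1,\dots,z_{m-1},z_{m+1}} z_{m+1}^n = h_n(1,z_1,\dots,z_{m-1}) z_{m+1}^n$ (note that here the ``$m$th slot'' of the operator is occupied by $z_{m+1}$, so the symmetric function only involves $1,z_1,\dots,z_{m-1}$), and similarly $D_{z_1,\dots,z_{m-2},z_m,z_{m+1}} z_{m+1}^n = h_n(1,z_1,\dots,z_{m-2},z_m) z_{m+1}^n$. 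Substituting these into the right-hand side, the common factor $z_{m+1}^n$ can be pulled out, and what remains to verify is precisely the scalar identity
\[
  h_n(1,z_1,\dots,z_m)
  = \frac{z_{m-1} h_n(1,z_1,\dots,z_{m-1}) - z_m h_n(1,z_1,\dots,z_{m-2},z_m)}{z_{m-1}-z_m},
\]
which is exactly the statement of \Cref{lem:1}. Hence the two operators agree on every $z_{m+1}^n$, and therefore as operators.

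The one point that needs care — and which I expect to be the only genuine subtlety — is bookkeeping about which indeterminate plays the role of the ``last variable'' (the one being differentiated) versus the role of the ``parameters.'' In \Cref{def:1}, the operator $D_{z_1,\dots,z_k}$ acts on power series in its \emph{last} argument $z_k$, using $h_n$ evaluated at $1$ together with the preceding arguments. So in $D_{z_1,\dots,z_{m-1},z_{m+1}}$ the active variable is $z_{m+1}$ and the parameters are $1,z_1,\dots,z_{m-1}$; in $D_{z_1,\dots,z_{m-2},z_m,z_{m+1}}$ the active variable is again $z_{m+1}$ but the parameters are $1,z_1,\dots,z_{m-2},z_m$. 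Both operators act on power series in $z_{m+1}$, matching the left-hand side, and $z_{m-1}, z_m$ in the rational prefactor on the right are treated as scalars commuting with everything. Once this indexing is pinned down, the computation reduces to a one-line application of \Cref{lem:1}, so there is no real obstacle beyond the notation.

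I would also remark (either here or in the following discussion) that the $m=2$ case of \Cref{lem:2} recovers a degenerate but consistent statement: with the convention that $D_{z_1}$ with a single subscript is just multiplication by $1$ (or, read through \eqref{eq:9}, that $D_{z_0,z_2}$ makes sense), the recursion has the right base behavior, and combined with \eqref{eq:9} it will let the next step of the argument express $D_{z_1,\dots,z_m}$ applied to a rational function as a rational function by induction on $m$. That downstream use is the reason the lemma is phrased as an operator identity rather than as an identity applied to a fixed series.
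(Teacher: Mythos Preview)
Your proof is correct and follows essentially the same approach as the paper's: both reduce the operator identity to its action on the monomials $z_{m+1}^n$ by linearity, then invoke \Cref{lem:1} for the resulting scalar identity on $h_n$. Your additional remarks about variable bookkeeping and the $m=2$ case are extra commentary not present in the paper's proof, but they do not affect the argument.
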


\begin{proof}
  Both sides are linear operators in the space of formal power series
  in \( z_{m+1} \). Therefore it suffices to show that they act on
  \( z_{m+1}^n \) in the same way, that is,
  \[
    D_{z_1,\dots,z_{m+1}} z_{m+1}^n
  = \frac{z_{m-1}D_{z_1,\dots,z_{m-1},z_{m+1}} - z_m D_{z_1,\dots,z_{m-2},z_m,z_{m+1}}}{z_{m-1}-z_{m}} z_{m+1}^n.  
  \]
  But this is equivalent to
  \[
  h_n(1,z_1,\dots,z_{m})z_{m+1}^n =
  \frac{z_{m-1}h_n(1,z_1,\dots,z_{m-1}) - z_m h_n(1,z_1,\dots,z_{m-2},z_{m})}{z_{m-1}-z_{m}} z_{m+1}^n,
  \]
  which follows from \Cref{lem:1}.
\end{proof}

Now we are ready to show that the operator \( D_{z_1,\dots,z_{m}} \)
preserves the rationality of a formal power series.

\begin{prop}\label{prop:d_rational}
  Suppose that \( \ell \) and \( m \) are integers with
  \( 2\le m\le \ell \) and let \( z_1,\dots,z_\ell \) be
  indeterminates. If \( f(z_1,\dots,z_\ell) \) is a formal power
  series in \( z_1,\dots,z_\ell \) that is a rational function in
  \( z_1,\dots,z_\ell \), then
  \( D_{z_1,\dots,z_{m}} f(z_1,\dots,z_\ell) \) is a rational function
  in \( z_1,\dots,z_\ell \).
\end{prop}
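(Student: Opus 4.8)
The plan is a short induction on $m$ using \Cref{lem:2} as the workhorse. The one wrinkle is that \Cref{lem:2} rewrites $D_{z_1,\dots,z_{m+1}}$ in terms of the operators $D_{z_1,\dots,z_{m-1},z_{m+1}}$ and $D_{z_1,\dots,z_{m-2},z_m,z_{m+1}}$, whose index lists are \emph{not} consecutive, so I would first record the harmless fact that, because $h_n(1,z_1,\dots,z_{j-1})$ is symmetric in $z_1,\dots,z_{j-1}$, the operator $D_{w_1,\dots,w_j}$ depends only on the \emph{set} $\{w_1,\dots,w_{j-1}\}$ together with the distinguished variable $w_j$, and that \Cref{lem:1} and \Cref{lem:2}, being polynomial identities, remain valid after any relabeling of the indeterminates. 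Accordingly I would prove by induction on $m$ the following slightly more general statement: for every $m\ge2$ and every tuple $(w_1,\dots,w_m)$ of distinct indeterminates among $z_1,\dots,z_\ell$, if $f$ is a formal power series in $z_1,\dots,z_\ell$ that is a rational function, then $D_{w_1,\dots,w_m}f$ is a rational function in $z_1,\dots,z_\ell$. Note that $D_{w_1,\dots,w_m}f$ is already a well-defined formal power series by \Cref{def:1}, so only rationality is at stake.

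For the base case $m=2$ this is exactly the observation made right after \eqref{eq:9}: relabeling $z_1,z_2$ to $w_1,w_2$ there gives $D_{w_1,w_2}f=(f-w_1\,f|_{w_2\mapsto w_1w_2})/(1-w_1)$, and writing $f=P/Q$ in lowest terms with $Q(0,\dots,0)\ne0$ (possible since $f$ is a rational power series) shows that the substitution $w_2\mapsto w_1w_2$ again produces a rational power series; hence $D_{w_1,w_2}f$ is rational.

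For the inductive step, let $m\ge3$, assume the statement for $m-1$, and fix a tuple $(w_1,\dots,w_m)$ of distinct indeterminates among $z_1,\dots,z_\ell$. Applying the relabeled \Cref{lem:2} with its index $m$ taken to be our $m-1$ (legitimate since $m-1\ge2$) and evaluating both operators on $f$ gives
\[
  D_{w_1,\dots,w_m}f
  = \frac{w_{m-2}\,D_{w_1,\dots,w_{m-2},w_m}f \;-\; w_{m-1}\,D_{w_1,\dots,w_{m-3},w_{m-1},w_m}f}{w_{m-2}-w_{m-1}}.
\]
Each operator on the right-hand side uses only $m-1$ distinct indeterminates, so by the induction hypothesis the two terms in the numerator are rational functions in $z_1,\dots,z_\ell$; since $w_{m-2}\ne w_{m-1}$ and the rational functions in $z_1,\dots,z_\ell$ form a field, the right-hand side is a rational function. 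This completes the induction, and the proposition is the case $(w_1,\dots,w_m)=(z_1,\dots,z_m)$.

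The only points needing care are bookkeeping ones: stating the induction hypothesis for arbitrary rather than consecutive indeterminates (forced by the index gap in \Cref{lem:2}), and, in the base case, noting that $w_2\mapsto w_1w_2$ preserves the property of being a formal power series, which is why one works with the lowest-terms representation whose denominator has nonzero constant term. I do not expect any genuine obstacle beyond this; the substance of the argument is already packed into \Cref{lem:1} and \Cref{lem:2}. Once this proposition is established, \Cref{thm:3} readily yields that $\sum_{b\ge0}J(b,k,\ell)x^b$ is a rational function in $x$, since $1/(2-h_k(1,x,xz_1,\dots,xz_\ell))$ and each $1/(1-z_i)$ are rational power series in $z_1,\dots,z_\ell$, the operators $D_{z_1,\dots,z_j}$ preserve this rationality, and extracting the coefficient of $z_1^k\cdots z_\ell^k$ from a rational power series leaves a rational function in the remaining variable $x$.
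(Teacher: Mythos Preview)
Your proof is correct and follows essentially the same approach as the paper: induction on $m$ with base case given by \eqref{eq:9} and inductive step given by \Cref{lem:2}. Your version is simply more explicit about the bookkeeping---namely, strengthening the induction hypothesis to arbitrary tuples of distinct indeterminates to accommodate the non-consecutive index lists appearing on the right-hand side of \Cref{lem:2}---which the paper treats as an implicit relabeling.
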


\begin{proof}
  If \( m=2 \), the statement follows from \eqref{eq:9}. Suppose that
  the statement holds for \( m\ge2 \). Then by \Cref{lem:2} the case
  of \( m+1 \) also holds. The proof then follows by induction.
\end{proof}

Finally, we can prove the rationality of the generating function
studied in the previous section.

\begin{cor}\label{cor:1}
  For fixed positive integers \( k \) and \( \ell \), the generating function
\[
  \sum_{b\ge0} J(b,k,\ell) x^b
\]
is a rational function in the variable \( x \).
\end{cor}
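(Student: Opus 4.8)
The plan is to apply Theorem~\ref{thm:3}, which expresses $\sum_{b\ge0} J(b,k,\ell)x^b$ as a coefficient extraction $[z_1^k\cdots z_\ell^k]$ of an expression built from the rational function $\frac{1}{2-h_k(1,x,xz_1,\dots,xz_\ell)}$ by applying the operators $D_{z_1,z_2}, D_{z_1,z_2,z_3},\dots,D_{z_1,\dots,z_\ell}$ and then multiplying by $\frac{1}{1-z_1}\cdots\frac{1}{1-z_\ell}$. Since $k$ and $\ell$ are fixed, $h_k(1,x,xz_1,\dots,xz_\ell)$ is a polynomial in $x,z_1,\dots,z_\ell$, so $A := \frac{1}{2-h_k(1,x,xz_1,\dots,xz_\ell)}$ is a rational function in these variables (and a well-defined formal power series, since the constant term of $h_k$ is $1$).

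First I would observe that each operator $D_{z_1,\dots,z_m}$ with $2\le m\le\ell$ preserves rationality in $z_1,\dots,z_\ell$ by Proposition~\ref{prop:d_rational}. Applying these operators successively to $A$, we conclude that
\[
  G(x,z_1,\dots,z_\ell) := D_{z_1,z_2}D_{z_1,z_2,z_3}\cdots D_{z_1,\dots,z_\ell}\, A
\]
is a rational function in $x,z_1,\dots,z_\ell$. (One should be slightly careful that $D_{z_1,\dots,z_m}$ acts on the variable $z_m$ and that $z_m$ genuinely appears among $z_1,\dots,z_\ell$ for each such $m$, which holds since $2\le m\le\ell$; Proposition~\ref{prop:d_rational} is stated precisely for this situation.) Multiplying by the rational factor $\frac{1}{1-z_1}\cdots\frac{1}{1-z_\ell}$ keeps the result rational. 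So the quantity inside the coefficient extraction is a rational function $R(x,z_1,\dots,z_\ell)$ in all $\ell+1$ variables.

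Next I would invoke the standard fact that extracting a single coefficient in one variable from a rational function of several variables yields a rational function of the remaining variables. Concretely, write $R = P/Q$ with $P,Q$ polynomials, $Q(x,0,\dots,0)\ne 0$; expanding $1/Q$ as a power series in $z_\ell$ (with coefficients rational in $x,z_1,\dots,z_{\ell-1}$) and multiplying by $P$, the coefficient of $z_\ell^k$ is a rational function in $x,z_1,\dots,z_{\ell-1}$ — for instance because $1/Q$ satisfies a linear recurrence in the $z_\ell$-degree with rational coefficients, so each individual coefficient is rational, and $P$ has only finitely many terms. Iterating this extraction over $z_\ell, z_{\ell-1},\dots,z_1$ shows that $[z_1^k\cdots z_\ell^k]R$ is a rational function in $x$ alone. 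By Theorem~\ref{thm:3} this equals $\sum_{b\ge0}J(b,k,\ell)x^b$, completing the proof.

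The only mildly delicate point — and the step I would state most carefully — is the last one: that coefficient extraction in one variable preserves rationality. This is routine (it can be phrased via partial fractions in $z_\ell$, or via the recurrence satisfied by the coefficients), but it deserves an explicit sentence rather than being taken for granted, since it is doing the real work of turning a multivariate rational function into a univariate one. Everything else is bookkeeping: Proposition~\ref{prop:d_rational} handles the operators, and polynomiality of $h_k$ for fixed $k$ handles the base case.
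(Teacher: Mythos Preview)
Your proposal is correct and follows essentially the same route as the paper: invoke \Cref{thm:3}, use \Cref{prop:d_rational} to see the expression inside the bracket is rational in $x,z_1,\dots,z_\ell$, and then argue that extracting $[z_1^k\cdots z_\ell^k]$ one variable at a time preserves rationality. The only cosmetic difference is in that last step: the paper observes $[z^k]g(z)=g^{(k)}(0)/k!$ and appeals to the quotient rule, whereas you use the linear recurrence for the coefficients of $1/Q$; both are standard and equally valid.
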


\begin{proof}
  By \Cref{thm:3} and \Cref{prop:d_rational}, we have
  \begin{equation}\label{eq:10}
    \sum_{b\ge0} J(b,k,\ell) x^b = [z_1^k \cdots z_\ell^k]
    f(z_1,\dots,z_\ell,x)
\end{equation}
for a formal power series \( f(z_1,\dots,z_\ell,x) \) in
\( z_1,\dots,z_\ell,x \) that is a rational function in these
indeterminates. If \( g(z) \) is a formal power series in \( z \)
that is a rational function in \( z \) and some other indeterminates,
say \( u_1,\dots,u_r \), then by the quotient rule,
\( [z^k]g(z)=k! g^{(k)}(0) \) is a rational function in
\( u_1,\dots,u_r \). Therefore the right-hand side of \eqref{eq:10} is
a rational function in \( x \) as desired.
\end{proof}

By \Cref{cor:1}, for fixed positive integers \( k \) and \( \ell \),
the sequence \( \{ J(b,k,\ell) \}_{b\ge0} \) satisfies a linear
recurrence relation. However, due to the complexity of its generating
function formula in \Cref{thm:3}, finding an explicit recurrence
relation appears to be challenging. In \cite{Ekhad2021}, the authors
used holonomic methods to find recurrence relations for the number of
multiset derangements. It would be interesting to see if their method
can be applied to the above sequence.

\section{Conclusion}
\label{sec:conclusion}

In this paper we found an expression for the generating function
\begin{equation}\label{eq:12}
  \sum_{b\ge0} J(b,k,\ell) x^b
\end{equation}
for the number of multiplex juggling card sequences when the capacity
\( k \) and the length \( \ell \) are fixed. As a consequence, we
showed that this generating function is a rational function in
\( x \). Equivalently, the sequence \( \{J(b,k,\ell)\}_{b\ge0} \)
satisfies a linear recurrence relation.

Note that there are three parameters in the number \( J(b,k,\ell) \)
and the generating function in \eqref{eq:12} keeps track of \( b \).
Therefore it is natural to consider the following two generating
functions:
\begin{align}
\label{eq:13}  \sum_{k\ge0} J(b,k,\ell) y^k,\\
  \label{eq:14} \sum_{\ell\ge0} J(b,k,\ell) z^\ell.
\end{align}
Since \( J(b,k,\ell) = J(b,\infty,\ell) \) for \( k\ge b \), it is
immediate that the generating function \eqref{eq:13} is a rational
function in \( y \). Using the transfer matrix method
\cite[Section~4.7]{EC1}, one can show that the generating function in
\eqref{eq:14} is also a rational function in \( z \). It would be very
interesting to see if this rationality continues to hold for the
multivariate generating function keeping track of all three parameters
\( b,k \), and \( \ell \).

\begin{problem}
  Determine whether the following is a rational function in the three variables \( x,y,z \):
  \[
  \sum_{b\ge0} \sum_{k\ge0}\sum_{\ell\ge0} J(b,k,\ell) x^by^k z^\ell.
\]
\end{problem}

Note that \( J(b,k,\ell) \) can be used to compute the number of ways
to juggle \( b \) balls with capacity \( k \) for beats
\( 1,\dots,\ell \) without any restrictions on the initial and final
states of the balls. In order to enumerate periodic multiplex juggling
patterns we need to consider the number \( J_0(b,k,\ell) \) of
\( \ell \)-card sequences \( (C_1,\dots,C_\ell)\in \JJ(b,k,\ell) \) such that
the departure composition of \( C_1 \) is equal to the arrival
composition of \( C_\ell \). It would be interesting to extend our
results to \( J_0(b,k,\ell) \). We end this paper with the following
problems.

\begin{problem}
  For fixed \( k \) and \( \ell \), find a formula for the generating function
  \[
  \sum_{b\ge0} J_0(b,k,\ell) x^b.
\]
\end{problem}

\begin{problem}
  Determine whether the following is a rational function in the three variables \( x,y,z \):
  \[
  \sum_{b\ge0} \sum_{k\ge0}\sum_{\ell\ge0} J_0(b,k,\ell) x^by^k z^\ell.
\]
\end{problem}

\section*{Acknowledgments}

We are grateful to the anonymous referee for the valuable comments,
especially for the short proof of \Cref{lem:1}. This project was
initiated as part of the research and education (R\&E) program at
Gyeonggi Science High School for the Gifted (GSHS) in 2023.

\bibliographystyle{abbrv}

\begin{thebibliography}{20}

\bibitem{Ayyer2015}
A.~Ayyer, J.~Bouttier, S.~Corteel, and F.~Nunzi.
\newblock {Multivariate juggling probabilities}.
\newblock {\em Electron. J. Probab.}, 20:no. 5, 29, 2015.

\bibitem{Banaian2016}
E.~Banaian, S.~Butler, C.~Cox, J.~Davis, J.~Landgraf, and S.~Ponce.
\newblock {Counting prime juggling patterns}.
\newblock {\em Graphs Combin.}, 32(5):1675--1688, 2016.

\bibitem{Benedetti2020}
C.~Benedetti, C.~R.~H. Hanusa, P.~E. Harris, A.~H. Morales, and A.~Simpson.
\newblock {Kostant's partition function and magic multiplex juggling
  sequences}.
\newblock {\em Ann. Comb.}, 24(3):439--473, 2020.

\bibitem{Buhler1994}
J.~Buhler, D.~Eisenbud, R.~Graham, and C.~Wright.
\newblock {Juggling drops and descents}.
\newblock {\em Amer. Math. Monthly}, 101(6):507--519, 1994.

\bibitem{Butler2019}
S.~Butler, J.~Choi, K.~Kim, and K.~Seo.
\newblock {Enumerating multiplex juggling patterns}.
\newblock {\em J. Integer Seq.}, 22(1):Art. 19.1.7, 21, 2019.

\bibitem{Butler2017}
S.~Butler, F.~Chung, J.~Cummings, and R.~Graham.
\newblock {Juggling card sequences}.
\newblock {\em J. Comb.}, 8(3):507--539, 2017.

\bibitem{Butler2010}
S.~Butler and R.~Graham.
\newblock {Enumerating (multiplex) juggling sequences}.
\newblock {\em Ann. Comb.}, 13(4):413--424, 2010.

\bibitem{Cardinal2006}
J.~Cardinal, S.~Kremer, and S.~Langerman.
\newblock {Juggling with pattern matching}.
\newblock {\em Theory Comput. Syst.}, 39(3):425--437, 2006.

\bibitem{Chung2008}
F.~Chung and R.~Graham.
\newblock {Primitive juggling sequences}.
\newblock {\em Amer. Math. Monthly}, 115(3):185--194, 2008.

\bibitem{Ehrenborg1996}
R.~Ehrenborg and M.~Readdy.
\newblock {Juggling and applications to {$q$}}-analogues.
\newblock {\em Discrete Math.}, 157(1-3):107--125, 1996.

\bibitem{Ekhad2021}
S.~B. Ekhad, C.~Koutschan, and D.~Zeilberger.
\newblock {There are {EXACTLY}}
  1493804444499093354916284290188948031229880469556 ways to derange a standard
  deck of cards (ignoring suits) [and many other such useful facts].
\newblock {\em Enumer. Comb. Appl.}, 1(3):Paper No. S2R17, 4, 2021.

\bibitem{Engstrom2015}
A.~Engstr\"{o}m, L.~Leskel\"{a}, and H.~Varpanen.
\newblock {Geometric juggling with {$q$}}-analogues.
\newblock {\em Discrete Math.}, 338(7):1067--1074, 2015.

\bibitem{Ismail}
M.~E.~H. Ismail.
\newblock {\em Classical and quantum orthogonal polynomials in one variable},
  volume~98 of {\em Encyclopedia of Mathematics and its Applications}.
\newblock Cambridge University Press, Cambridge, 2009.

\bibitem{OEIS}
{OEIS Foundation Inc.}
\newblock The {O}n-{L}ine {E}ncyclopedia of {I}nteger {S}equences.
\newblock \url{http://oeis.org}.

\bibitem{Polster2003}
B.~Polster.
\newblock {\em {The mathematics of juggling}}.
\newblock Springer-Verlag, New York, 2003.

\bibitem{Stadler2002}
J.~D. Stadler.
\newblock {Juggling and vector compositions}.
\newblock {\em Discrete Math.}, 258(1-3):179--191, 2002.

\bibitem{EC1}
R.~P. Stanley.
\newblock {\em Enumerative Combinatorics. {V}ol. 1, second ed.}
\newblock Cambridge University Press, New York/Cambridge, 2011.

\bibitem{Warrington2005}
G.~S. Warrington.
\newblock {Juggling probabilities}.
\newblock {\em Amer. Math. Monthly}, 112(2):105--118, 2005.

\end{thebibliography}

\end{document}